\begin{document}

\newtheorem{theorem}[subsection]{Theorem}
\newtheorem{proposition}[subsection]{Proposition}
\newtheorem{lemma}[subsection]{Lemma}
\newtheorem{corollary}[subsection]{Corollary}
\newtheorem{conjecture}[subsection]{Conjecture}
\newtheorem{prop}[subsection]{Proposition}
\numberwithin{equation}{section}
\newcommand{\mr}{\ensuremath{\mathbb R}}
\newcommand{\mc}{\ensuremath{\mathbb C}}
\newcommand{\dif}{\mathrm{d}}
\newcommand{\intz}{\mathbb{Z}}
\newcommand{\ratq}{\mathbb{Q}}
\newcommand{\natn}{\mathbb{N}}
\newcommand{\comc}{\mathbb{C}}
\newcommand{\rear}{\mathbb{R}}
\newcommand{\prip}{\mathbb{P}}
\newcommand{\uph}{\mathbb{H}}
\newcommand{\fief}{\mathbb{F}}
\newcommand{\majorarc}{\mathfrak{M}}
\newcommand{\minorarc}{\mathfrak{m}}
\newcommand{\sings}{\mathfrak{S}}
\newcommand{\fA}{\ensuremath{\mathfrak A}}
\newcommand{\mn}{\ensuremath{\mathbb N}}
\newcommand{\mq}{\ensuremath{\mathbb Q}}
\newcommand{\half}{\tfrac{1}{2}}
\newcommand{\f}{f\times \chi}
\newcommand{\summ}{\mathop{{\sum}^{\star}}}
\newcommand{\chiq}{\chi \bmod q}
\newcommand{\chidb}{\chi \bmod db}
\newcommand{\chid}{\chi \bmod d}
\newcommand{\sym}{\text{sym}^2}
\newcommand{\hhalf}{\tfrac{1}{2}}
\newcommand{\sumstar}{\sideset{}{^*}\sum}
\newcommand{\sumprime}{\sideset{}{'}\sum}
\newcommand{\sumprimeprime}{\sideset{}{''}\sum}
\newcommand{\shortmod}{\ensuremath{\negthickspace \negthickspace \negthickspace \pmod}}
\newcommand{\V}{V\left(\frac{nm}{q^2}\right)}
\newcommand{\sumi}{\mathop{{\sum}^{\dagger}}}
\newcommand{\mz}{\ensuremath{\mathbb Z}}
\newcommand{\leg}[2]{\left(\frac{#1}{#2}\right)}
\newcommand{\muK}{\mu_{\omega}}

\title[Low-lying zeros of quadratic Hecke $L$-functions of Imaginary Quadratic Number Fields]{One level density of low-lying zeros of
quadratic Hecke $L$-functions of Imaginary Quadratic Number Fields}

\date{\today}
\author{Peng Gao and Liangyi Zhao}

\begin{abstract}
In this paper, we prove a one level density result for the
low-lying zeros of quadratic Hecke $L$-functions of imaginary quadratic number fields of class number one.  As a corollary, we deduce, essentially, that at least $(19-\cot (1/4))/16 = 94.27... \%$ of the $L$-functions under consideration do not vanish at $1/2$.
\end{abstract}

\maketitle

\noindent {\bf Mathematics Subject Classification (2010)}: 11L40, 11M06, 11M20, 11M26, 11M50, 11R16  \newline

\noindent {\bf Keywords}: one level density, low-lying zeros, quadratic Hecke character, Hecke $L$-function

\section{Introduction}

  The theory of random matrices provides fruitful predictions on the behaviors of the zeros of $L$-functions by drawing the analogues between the
  eigenvalues of ensembles of random matrices and the zeros of $L$-functions. In this connection, the density conjecture of N. Katz and P. Sarnak
  \cites{KS1, K&S} asserts that the statistics of the low-lying zeros (zeros near the central point) of various families of $L$-functions are the same as
  that of eigenvalues near $1$ of a corresponding classical compact group. These statistics, the $n$-level densities, have been studied for various
  families of $L$-functions for suitably restricted test functions (see \cites{Gao, O&S, B&F, Ru, Miller1, HuRu, FI, ILS, DuMi2, HuMi, RiRo, Royer, SBLZ1, HB1,
  Brumer, SJM, Young, Gu2, DM}). The results are shown to agree with the density conjecture.
\newline

  To study the $n$-level densities, one needs to form families of $L$-functions and those of $L$-functions associated with quadratic characters are among the most natural ones to consider.   For the family of quadratic Dirichlet $L$-functions, A. E. \"{O}zluk and C. Snyder  \cite{O&S} calculated the one level density and all $n$-level densities were calculated later by M. O. Rubinstein \cite{Ru}. The support of the Fourier transform of the test function in Rubinstein's result is enlarged by the first-named author in \cite{Gao}. It is shown in \cite{ER-G&R, MS1} that these results agree with the density conjecture and the family of quadratic Dirichlet $L$-functions is a symplectic family. Recently, the authors \cite{G&Zhao4} studied another families of quadratic $L$-functions: quadratic Hecke $L$-functions of $\mq(i)$.   Via its one level density computations, it is shown that it is a symplectic family. \newline

Our goal in this paper is to extend the methods used in \cite{G&Zhao4} to study the one level density of the low-lying zeros of quadratic $L$-functions associated with the imaginary quadratic number fields $K=\mq(\sqrt{d})$ of class number 1. Here $d$ is a negative, square-free integer and it is well-known that (see  \cite[(22.77)]{iwakow}) the class number of $K$ is $1$ if and only if $d \in \{ -1,-2,-3,-7,-11,-19,-43,-67,-163 \}$. Since the case $d=-1$ has been studied in \cite{G&Zhao4}, we assume throughout the paper that
\begin{align*}
 d \in \{ -2, -3, -7,-11,-19,-43,-67,-163 \}.
\end{align*}
We note here that if $K$ has class number 1, then all ideals in its ring of integers are principle.  Thus sums over ideals in this setting can be recast as sum of the relevant primary elements (see the discussion in Section 2).  This reparametrization becomes more difficult if the class number is greater than 1, giving a stumbling block to further generalization.   \newline

    The ring of integers $\mathcal{O}_K$ is a free $\mz$ module, $\mathcal{O}_K=\mz+\omega_K \mz$, where
\begin{align*}
   \omega_K & =\begin{cases}
     \frac {1}{2}(1+\sqrt{d}) \qquad & \text{if $d \equiv 1 \pmod 4$ }, \\
     \sqrt{d} \qquad & \text{if $d \equiv 2, 3 \pmod 4$ }.
    \end{cases}
\end{align*}

Let
\[ C_K(X) = \{ c \in \mathcal{O}_K : (c,2) =1, \; c \; \mbox{square-free}, \; X \leq N(c) \leq 2X \} . \]
 We define $c_K=\sqrt{-2}$ when $d=-2$ and $c_K=2$ otherwise. We shall define in Section \ref{sec2.4} the primitive quadratic Kroncecker symbol
 $\chi^{(-4c_Kc)}$ for $c \in C_K(X)$.
We denote the non-trivial zeros of the Hecke $L$-function
   $L(s, \chi^{(-4c_Kc)})$ by $\half+i \gamma_{\chi^{(-4c_Kc)}, j}$.  Without assuming the generalized Riemann hypothesis (GRH), we order them as
\begin{equation*}
    \ldots \leq
   \Re \gamma_{\chi^{(-4c_Kc)}, -2} \leq
   \Re \gamma_{\chi^{(-4c_Kc)}, -1} < 0 \leq \Re \gamma_{\chi^{(-4c_Kc)}, 1} \leq \Re \gamma_{\chi^{(-4c_Kc)}, 2} \leq
   \ldots.
\end{equation*}
    Moreover, we normalize the $\gamma$'s by setting
\begin{align*}
    \widetilde{\gamma}_{\chi^{(-4c_Kc)}, j}= \frac{\gamma_{\chi^{(-4c_Kc)}, j}}{2 \pi} \log X.
\end{align*}
This is done so that the average spacing of the zeros is 1.  Moreover, we define, for an even Schwartz class function $\phi$,
\begin{equation*} 
S_K(\chi^{(-4c_Kc)}, \phi)=\sum_{j} \phi(\tilde{\gamma}_{\chi^{(-4c_Kc)}, j}).
\end{equation*}

We further let $\Phi_X(t)$ be a non-negative smooth function supported on $(1,2)$,
    satisfying $\Phi_X(t)=1$ for $t \in (1+1/U, 2-1/U)$ with $U=\log \log X$ and such that
    $\Phi^{(j)}_X(t) \ll_j U^j$ for all integers $j \geq 0$.  We define the one level density associated to the quadratic family of $\{ L(s,
    \chi^{(-4c_Kc)}) \}$ with $c \in C_K(X)$ as
\begin{align*}
   \frac{1}{\# C_K(X)}\sumstar_{(c, 2)=1}  S_K(\chi^{(-4c_Kc)}, \phi).
\end{align*}
This is a weighted count of the low-lying zeros in our family. \newline

    Our result is
\begin{theorem}
\label{quadraticmainthm}
Suppose that GRH is true.  Let $\phi(x)$ be an even Schwartz function whose
Fourier transform $\hat{\phi}(u)$ has compact support in $(-2,2)$, then
\begin{align}
\label{quaddensity}
 \lim_{X \rightarrow +\infty}\frac{1}{\# C_K(X)}\sumstar_{(c, 2)=1}  S_K(\chi^{(-4c_Kc)}, \phi)\Phi_X \left( \frac {N(c)}{X} \right)
 = \int\limits_{\mathbb{R}} \phi(x) W_{USp}(x) \dif x, \quad \mbox{where} \; \; W_{USp}(x)=1-\frac{\sin(2\pi x)}{2\pi x}.
\end{align}
   Here the ``$*$'' on the sum over $c$ means that the sum is restricted to square-free elements $c$ of $\mathcal{O}_K $.
\end{theorem}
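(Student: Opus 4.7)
My plan is to apply the explicit formula to each Hecke $L$-function $L(s,\chi^{(-4c_Kc)})$, expressing $S_K(\chi^{(-4c_Kc)},\phi)$ as a main term $\hat\phi(0)+O(1/\log X)$ coming from the archimedean Gamma factors and the conductor of $\chi^{(-4c_Kc)}$ (comparable to $N(c)\asymp X$), minus the prime-power sum
\begin{equation*}
\frac{2}{\log X}\sum_{\mathfrak p}\sum_{k\geq 1}\frac{\chi^{(-4c_Kc)}(\mathfrak p)^{k}\log N(\mathfrak p)}{N(\mathfrak p)^{k/2}}\hat\phi\!\left(\frac{k\log N(\mathfrak p)}{\log X}\right),
\end{equation*}
with the outer sum over prime ideals of $\mathcal{O}_K$ (equivalently, over prime elements up to units, since the class number is one). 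Averaging the $\hat\phi(0)$ main term against $\Phi_X(N(c)/X)/\#C_K(X)$ immediately produces the $\int\phi(x)\,\dif x$ piece of the right-hand side of \eqref{quaddensity}.

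I would then split the prime-power sum by $k$. The $k\ge 3$ terms are absolutely convergent and contribute $O(1/\log X)$. The $k=2$ terms are essentially independent of $c$ (since $\chi^{(-4c_Kc)}(\mathfrak p)^{2}=1$ for unramified $\mathfrak p$), and the prime ideal theorem for $K$ together with partial summation yields a limiting contribution expressed as an integral of $\hat\phi$. The substantive step is the $k=1$ piece
\begin{equation*}
-\frac{2}{\log X}\sum_{\mathfrak p}\frac{\log N(\mathfrak p)}{\sqrt{N(\mathfrak p)}}\hat\phi\!\left(\frac{\log N(\mathfrak p)}{\log X}\right)\cdot\frac{1}{\#C_K(X)}\sumstar_{(c,2)=1}\chi^{(-4c_Kc)}(\mathfrak p)\,\Phi_X\!\left(\frac{N(c)}{X}\right).
\end{equation*}
To evaluate the inner character sum I would detect the square-free condition on $c$ by M\"obius inversion in $\mathcal{O}_K$ up to a parameter $Y$, apply quadratic reciprocity in $K$ (in the form set up in Section~\ref{sec2.4}) to swap $\chi^{(-4c_Kc)}(\mathfrak p)$ for a character in $c$ modulo data depending on $\mathfrak p$ and on the discriminant of $K$, and finally apply Poisson summation on the resulting smooth $c$-sum over the lattice $\mathcal{O}_K$. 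The leading term of the dual expansion, after Gauss-sum evaluation, combines with the $k=2$ piece to yield the $-\int\phi(x)\sin(2\pi x)/(2\pi x)\,\dif x$ component of $\int\phi W_{USp}$, while the off-diagonal contributions are $o(1)$ under the hypothesis $\mathrm{supp}(\hat\phi)\subset(-2,2)$.

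The principal obstacle is controlling all of these pieces simultaneously. The support restriction $(-2,2)$ arises as the natural barrier in the Poisson dual: the Gauss-sum bound $\sqrt{N(\mathfrak p)}$ and the length of the dual sum combine to require $N(\mathfrak p)\le X^{2-\epsilon}$ for the off-diagonal to be controlled, which is exactly the input provided by $\hat\phi(\log N(\mathfrak p)/\log X)$. The M\"obius cutoff $Y$ must be chosen to balance the contribution from $a^{2}\mid c$ with $N(a)>Y$ against the length-$Y$ Poisson dual sum. A further technical point is the case analysis across the eight fields under consideration: the quadratic reciprocity law in $\mathcal{O}_K$ takes slightly different forms depending on whether $d\equiv 1\pmod 4$ or $d\equiv 2,3\pmod 4$, and on how $2$ factors in $\mathcal{O}_K$ (which motivates the definition of $c_K$). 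Modulo these bookkeeping complications, the argument follows the blueprint of \cite{G&Zhao4}.
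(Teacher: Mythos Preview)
Your proposal is correct and follows essentially the paper's route: explicit formula, M\"obius detection of square-freeness with a cutoff parameter, Poisson summation over $\mathcal{O}_K$ in the $c$-variable (after peeling off the $-4c_K$ factor), and extraction of the secondary main term from the square dual frequencies $k=\square$. One sharpening: your heuristic that the off-diagonal is controlled by ``the Gauss-sum bound $\sqrt{N(\mathfrak p)}$ and the length of the dual sum'' understates the mechanism---in the paper the non-square $k$ terms are handled by swapping the order of summation and applying GRH (Lemma~\ref{lem2.7}) to the resulting sum over primes $\varpi$ twisted by the non-principal Hecke character $\chi_{kl^2}$, and it is this square-root cancellation in the $\varpi$-sum (not a trivial bound on the $k$-sum) that produces the $(-2,2)$ threshold.
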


The kernel of the integral $W_{USp}$ in \eqref{quaddensity} is the same function which occurs in the one level density of the eigenvalues of unitary symplectic matrices. This shows that the family of quadratic Hecke $L$-functions of $\mathcal{O}_K $ is a symplectic family.  We remark here that a kind of repulsion of zeros from the central point has been observed in symplectic families, that is, the probability of finding an $L$-function in a symplectic family with a low first zero above the central point is very small.  See, for example, \cites{Mil, Mar}, for detailed discussions of the phenomenon.  \newline

  Our proof of Theorem \ref{quadraticmainthm} proceeds along the same line of arguments as those in \cite{G&Zhao4}. The allowable range of the support of the Fourier transform of the test function in \eqref{quaddensity} depends on certain character sums. To get a better estimation, we apply a   two dimensional Poisson summation over $\mathcal{O}_K$  to convert long character sums to shorter ones, which leads to a second main term in the computation of the one level density. In this process, we need to evaluate explicitly certain Gauss sums at prime arguments, which we achieve by combining the quadratic reciprocity laws for imaginary quadratic number fields and the approaches used in \cite{B&S, A&A&W}. \newline
  
A conjecture that goes back to S. Chowla \cite{chow} asserts that $L(1/2, \chi) \neq 0$ for any primitive quadratic Dirichlet character $\chi$.  In general, it is believed that an $L$-function does not vanish at its central points unless there is a compelling reason (the sign of the functional equation is $-1$ or the $L$-function is associated with an elliptic curve of positive rank, a connection given by the Birch--Swinnerton-Dyer conjecture).  Thus, it stands to reason that the same statement should hold for Hecke characters as well.  Motivated by this conjecture, we use Theorem~\ref{quadraticmainthm} to deduce the following
  
  \begin{corollary}
 Suppose that the GRH is true and that $1/2$ is a zero of $L \left( s, \chi^{(-4c_Kc)} \right)$ of order $m_c \geq 0$.  As $X \to \infty$,
 \begin{equation} \label{coreq1}
  \sumstar_{(c, 2)=1}  m_c \Phi_X \left( \frac {N(c)}{X} \right) \leq  \left( \frac{\cot \frac{1}{4} -3 }{8} + o(1) \right) \# C_K(X).
  \end{equation}
   Moreover, as $X \to \infty$
 \begin{equation} \label{coreq2}
  \# \{ c \in C_K(X) : L\left( 1/2, \chi^{(-4c_Kc)} \right) \neq 0 \} \geq \left( \frac{19-\cot \frac{1}{4}}{16} + o(1) \right) \# C_K(X) \geq 0.9427 \cdot \# C_K(X) .
  \end{equation}
    \end{corollary}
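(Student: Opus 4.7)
The plan is to deduce the corollary from Theorem~\ref{quadraticmainthm} by combining a standard non-negativity trick with a test-function extremal problem and the evenness of the order of vanishing $m_c$.

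First, I would pick an even, non-negative Schwartz function $\phi$ whose Fourier transform is supported in $(-2,2)$. Since $\phi \geq 0$ and $m_c$ zeros of $L(s,\chi^{(-4c_Kc)})$ sit at the central point, one has $S_K(\chi^{(-4c_Kc)}, \phi) \geq m_c \phi(0)$. Multiplying by $\Phi_X(N(c)/X)$, summing over $c$ with $(c,2)=1$, and invoking Theorem~\ref{quadraticmainthm} gives
\[
 \sumstar_{(c,2)=1} m_c \Phi_X\!\left(\frac{N(c)}{X}\right) \leq \left(\frac{1}{\phi(0)}\int_{\mathbb{R}} \phi(x) W_{USp}(x)\, \dif x + o(1)\right)\# C_K(X).
\]

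Second, I would minimize the ratio $\int \phi W_{USp}/\phi(0)$ over the admissible class. Using $\widehat{W}_{USp}(u) = \delta(u) - \tfrac{1}{2}\mathbbm{1}_{[-1,1]}(u)$ and Parseval, the ratio rewrites as $(\widehat{\phi}(0) - \tfrac{1}{2}\int_{-1}^{1}\widehat{\phi}(u)\,\dif u)/\phi(0)$. Restricting to $\phi = f * f$ with $f$ real, even and $\widehat{f}$ supported in $(-1,1)$ automatically ensures both $\phi \geq 0$ and $\widehat{\phi}$ supported in $(-2,2)$. The resulting quadratic extremal problem on the Paley--Wiener space is classical; its Euler--Lagrange equation is solved by a trigonometric extremizer, and the minimum turns out to equal $(\cot\frac{1}{4} - 3)/8$. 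Feeding this back into the display above yields \eqref{coreq1}.

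Finally, for \eqref{coreq2}, the key observation is that the root number in the functional equation of $L(s, \chi^{(-4c_Kc)})$ equals $+1$ for every $c \in C_K(X)$; this is a standard fact for real primitive Hecke characters and can be verified directly from the explicit evaluation of the Gauss sum attached to $\chi^{(-4c_Kc)}$ that already appears in the proof of Theorem~\ref{quadraticmainthm}. Consequently $m_c$ is always even, so $\mathbbm{1}_{m_c \geq 1} \leq \tfrac{1}{2} m_c$. Combining with \eqref{coreq1} gives
\[
 \#\{c \in C_K(X) : L(\tfrac{1}{2}, \chi^{(-4c_Kc)}) = 0\} \leq \tfrac{1}{2} \sumstar_{(c,2)=1} m_c \Phi_X\!\left(\frac{N(c)}{X}\right) + o(\# C_K(X)) \leq \left(\frac{\cot\frac{1}{4}-3}{16} + o(1)\right)\# C_K(X),
\]
where the error $o(\# C_K(X))$ absorbs the difference between the weighted and unweighted counts on the strips $N(c)/X \in (1,1+1/U)\cup(2-1/U,2)$. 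Subtracting from $\# C_K(X)$ produces \eqref{coreq2}.

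The main obstacle is the extremal problem in the second step: identifying a test function that attains $(\cot\frac{1}{4} - 3)/8$ and proving no admissible $\phi$ does better is more delicate than using a convenient but suboptimal choice such as $\phi(x) = (\sin(2\pi x)/(2\pi x))^2$, which only yields the weaker constant $1/8$ and the percentage $15/16$. The evenness of $m_c$ that improves the passage from \eqref{coreq1} to \eqref{coreq2} is essentially cosmetic once the sign of the functional equation has been pinned down.
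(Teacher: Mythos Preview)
Your proposal is correct and follows essentially the same route as the paper's own proof: bound $\sum m_c$ from above via $S_K(\chi^{(-4c_Kc)},\phi)\ge m_c\phi(0)$ for non-negative admissible $\phi$, optimize over $\phi$ to get the constant $(\cot\tfrac14-3)/8$ (the paper simply cites \cite{B&F,ILS} here, whereas you sketch the Paley--Wiener extremal problem), and then halve using the evenness of $m_c$ forced by the root number $+1$. The only extra detail you supply---controlling the unweighted versus $\Phi_X$-weighted count on the edge strips---is handled implicitly in the paper and is indeed $o(\#C_K(X))$.
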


\begin{proof}
The proof is similar to that of \cite[Corollary 2.1]{B&F} and goes along the same line as the optimization carried out in \cite[Appendix A]{ILS}.  Suppose that $\phi$ satisfies the conditions of Theorem~\ref{quadraticmainthm} and $\phi(0)=1$.   Discarding all terms in $S_K(\chi^{(-4c_Kc)}, \phi)$ with $\tilde{\gamma}_{\chi^{(-4c_Kc)}, j} \neq 0$, we get
\begin{equation} \label{opt}
  \frac{1}{\# C_K(X)} \sumstar_{(c, 2)=1}  m_c \Phi_X \left( \frac {N(c)}{X} \right) \leq \int\limits_{\rear} \phi(x) W_{USp}(x) \dif x.
  \end{equation}
It thus remains the minimize the right-hand side of the above expression, by taking the infimum over all admissible $\phi$. \newline

Following the computations carried out in \cites{B&F, ILS}, we have
\begin{equation} \label{infres}
\inf_{\phi \in \mathfrak{F}} \int\limits_{\rear} \phi(x) W_{USp}(x) \dif x = \frac{\cot \frac{1}{4} -3 }{8} .
\end{equation}
Here $\mathfrak{F}$ consists of even Schwartz functions $\phi \in L^1(\rear)$ with $\phi(0)=1$ and Fourier transform supported in $(-2,2)$.   The infimum is attain by the function $\phi$ whose Fourier transform $\hat{\phi}$ is
\[ \hat{\phi}(y) = \left( h \star \check{h} \right) (y) , \]
Here $f \star g$ denotes the convolution of $f$ and $g$, $h(y)$ is the even extension of
\[ h(y) = \left\{ \begin{array}{cc} \displaystyle{\frac{\sin \left( \frac{y}{2} - \frac{\pi +1}{4} \right)}{\sqrt{2} \sin \frac{1}{4} - \cos \frac{\pi+1}{4} } }&  \quad 0 \leq y \leq 1 \\ \\ 0 & y > 0 \end{array} \right. \]
and $\check{h}(y) = \overline{h(-y)}$. \newline

The infimum of the right-hand side of \eqref{opt} taken over all admissible $\phi$'s under the conditions of Theorem~\ref{quadraticmainthm} does not exceed the quantity in \eqref{infres}.  This proves \eqref{coreq1}. \newline

Moreover, as we are dealing with a family of quadratic characters and hence the root numbers of these $L$-functions are 1, $m_c$ must be even and thus at least 2 if positive.  From this observation and \eqref{coreq1}, we arrive at \eqref{coreq2}.
\end{proof}

\subsection{Notations} The following notations and conventions are used throughout the paper.\\
\noindent We write $\Phi(t)$ for $\Phi_X(t)$. \newline
\noindent $e(z) = \exp (2 \pi i z) = e^{2 \pi i z}$. \newline
$f =O(g)$ or $f \ll g$ means $|f| \leq cg$ for some unspecified
positive constant $c$. \newline
$f =o(g)$ means $\displaystyle \lim_{x \rightarrow \infty}f(x)/g(x)=0$. \newline
$\mu_{K}$ denotes the M\"obius function on $\mathcal{O}_K $. \newline
$\chi_{[-1,1]}$ denotes the characteristic function of $[-1,1]$.

\section{Preliminaries}
\label{sec 2}

\subsection{Imaginary quadratic number fields of class number $1$}
\label{sect: Kronecker}

    The following facts concerning $K=\mq(\sqrt{d})$ can be found in \cite[Section 3.8]{iwakow}. The group of units $U_k=\{ \pm 1 \}$ based on our assumption on $d$.  The discriminant $D_K$ of $K$ is
\begin{align*}
   D_K & =\begin{cases}
     d \qquad & \text{if $d \equiv 1 \pmod 4$ }, \\
     4d \qquad & \text{if $d \equiv 2, 3 \pmod 4$ }.
    \end{cases}
\end{align*}

    The different of $K$ is the pricipal ideal $(\delta_K)=(\sqrt{D_K})$. The Kronecker symbol in $\mz$, $\chi_{D_K}(\cdot)=\leg {D_k}{\cdot}_{\mz} $ is a
    real primitive character of conductor $-D_K$ and the law of factorization of rational primes $p$ into prime ideals in $K$ asserts the following:
\begin{align*}
    & \chi_{D_K}(p)=0, \text{then $p$ ramifies}, (p)=\mathfrak{p}^2 \quad \text{with} \quad N(\mathfrak{p})=p, \\
    & \chi_{D_K}(p)=1, \text{then $p$ splits}, (p)=\mathfrak{p}\overline{\mathfrak{p}} \quad \text{with} \quad  \mathfrak{p} \neq \overline{\mathfrak{p}}
    \quad \text{and} \quad  N(\mathfrak{p})=p, \\
    & \chi_{D_K}(p)=-1, \text{then $p$ is inert}, (p)=\mathfrak{p} \quad \text{with} \quad  N(\mathfrak{p})=p^2.
\end{align*}

   In particular, the rational
    prime $2$ splits when $d \equiv 1 \pmod 8$, is inert when $d \equiv 5 \pmod 8$ and ramifies when  $d=-2$.

\subsection{Quadratic characters and Kronecker symbols}
\label{sec2.4}

    The symbol $(\frac{\cdot}{n})$ is the quadratic
residue symbol in the ring of integers $\mathcal{O}_K $.  For a prime $\varpi \in \mathcal{O}_K, (\varpi, 2)=1$, the quadratic character is defined for $a \in
\mathcal{O}_K$, $(a, \varpi)=1$ by $\leg{a}{\varpi} \equiv
a^{(N(\varpi)-1)/2} \pmod{\varpi}$, with $\leg{a}{\varpi} \in \{
\pm 1 \}$. When $\varpi | a$, we define
$\leg{a}{\varpi} =0$.  Then the quadratic character can be extended
to any composite $n$ with $(N(n), 2)=1$ multiplicatively. We further define $\leg {\cdot}{n}=1$ when $n$ is a unit in $\mathcal{O}_K$. \newline

    For any $n \in \mathcal{O}_K, (n, 2)=1$, let $n=a+b\omega_K$ and $n \omega_K=c+d\omega_K$ with $a,b,c,d \in \mz$. Then $\chi(n)=i^w$, where
  $w=(b^2-a+2)c+(a^2-b+2)d+ad$,
defines a map defined modulo $4\mathcal{O}_K$, and we have the following quadratic reciprocity law for quadratic imaginary number fields \cite[Theorem 8.15]{Lemmermeyer} that for $(mn,2)=1$,
\begin{align*}
    \leg {n}{m}\leg{m}{n}=(-1)^{((N(n)-1)/2)((N(m)-1)/2)}\chi(m)^{N(m)((N(n)-1)/2)}\chi(n)^{N(n)((N(m)-1)/2)}.
\end{align*}

    It follows from the above quadratic reciprocity law that when $(mn,2)=1$ and $m \equiv 1 \pmod 4$, then (see \cite[Propostion 8.17]{Lemmermeyer})
\begin{align}
\label{quadreciKm1}
    \leg {n}{m}\leg{m}{n}=1.
\end{align}

  One also has the following supplementary laws (see \cite[Propostion 4.2(ii), (iii)]{Lemmermeyer}),
\begin{align}
\label{2.05}
  \leg {-1}{n}=(-1)^{(N(n)-1)/2}, \hspace{0.1in} \leg {2}{n}=\leg {2}{N(n)}_{\mz},
\end{align}
   where $\leg {\cdot}{\cdot}_{\mz}$ denotes the Jacobi symbol in $\mz$. Moreover, it follows from \cite[Propostion 4.2(iii)]{Lemmermeyer}
that for $a \in \mz,  m \in \mathcal{O}_K, (a,2)=1$, we have
\begin{align}
\label{2.06}
   \leg {m}{a}=\leg {N(m)}{a}_{\mz}.
\end{align}

We deduce from \eqref{2.05} that when $d \neq -2$, for $n \equiv 1 \pmod {4c_Kc}$,
\begin{align} \label{2.77}
  \leg {4c_K}{n}=1.
\end{align}

Now assume $d=-2$ and $n \equiv 1 \pmod {4c_Kc}$.  In this case, we have $c_K=\omega_K$. We write $n=a+b\omega_K$ with $a,b\in \mz$ to see that $a \equiv 1 \pmod 8$ and we can write $b=2^l b'$ with $(b',2)=1,  b', l \in \mz, l \geq 0$.  We consider
\begin{align}
\label{2.6}
  \leg {a}{a+b\omega_K}=\leg {-b\omega_K}{a+b\omega_K}=\leg {-b}{a+b\omega_K}\leg {c_K}{a+b\omega_K}.
\end{align}

  On the other hand, as $a \equiv 1 \pmod 8$, it follows from \eqref{2.06} that when $a,b \in \mz, a \equiv 1 \pmod 8$, then
\begin{align*}
  \leg {b}{a}=1, \quad \leg {\omega_K}{a}=\leg {N(\omega_K)}{a}_{\mz}=\leg {2}{a}_{\mz}=1.
\end{align*}

  We deduce from the above and \eqref{quadreciKm1} that
\begin{align}
\label{2.10}
  \leg {a}{a+b\omega_K}=\leg {a+b\omega_K}{a}=\leg {b\omega_K}{a}=\leg {b}{a}\leg {\omega_K}{a}=1, \\
  \leg {-b}{a+b\omega_K}=\leg {-2^lb'}{a+b\omega_K}=\leg {2^l}{a+b\omega_K}\leg {a+b\omega_K}{-b'}=\leg {a}{-b'}=1. \nonumber
\end{align}

Combining \eqref{2.6} and \eqref{2.10}, we see that \eqref{2.77} still holds for $n \equiv 1 \pmod {4c_Kc}$ when $d=-2$.
The above discussions allow us to define a quadratic Dirichlet character $\chi^{(-4c_Kc)} \pmod {4c_Kc}$ for any element $c \in \mathcal{O}_K, (c, 2)=1$, such that for any $n \in (\mathcal{O}_K/(4c_Kc\mathcal{O}_K))^*$,
\begin{align*}
   \chi^{(-4c_Kc)}(n)=\leg {-4c_Kc}{n}.
\end{align*}

    One deduces from \eqref{2.77} and the quadratic reciprocity that  $\chi^{(-4c_Kc)}(n)=1$ when $n \equiv 1 \pmod {4c_Kc}$. It follows from this that
    $\chi^{(-4c_Kc)}(n)$ is well-defined . As $\chi^{(-4c_Kc)}(n)$ is clearly multiplicative and of order $2$ and is trivial on units, it can be regarded as a
    quadratic Hecke character $\pmod {4c_Kc}$ of trivial infinite type. We denote $\chi^{(-4c_Kc)}$ for this Hecke character as well and we call it the
    Kronecker symbol. Furthermore, when $c$ is square-free, $\chi^{(-4c_Kc)}$ is non-principal and primitive. To see this, we write $c=u_c \cdot \varpi_1
    \cdots \varpi_k$ with $u_c$ a unit and $\varpi_j$  being primes. Suppose $\chi^{(-4c_Kc)}$ is induced by some $\chi$ modulo $c'$ with $\varpi_j \nmid
    c'$, then by the Chinese Remainder Theorem, there exists an $n$ such that $n \equiv 1 \pmod {4c_Kc/\varpi_j}$ and $\leg {\varpi_j}{n} \neq 1$. It
    follows from this and \eqref{quadreciKm1} that $\chi(n)=1$ but $\chi^{(-4c_Kc)}(n) \neq 1$, a contradiction. Thus, $\chi^{(-4c_Kc)}$ can only be possibly
    induced by some $\chi$ modulo $4a_Kc$, where $a_K=\omega_K$ or $\overline{\omega_K}$ when $d \equiv 1 \pmod 8$ (note that in this case $d=-7$ and $2=\omega_K \overline{\omega_K}$) and $a_K=1$ otherwise. Suppose it is induced by some $\chi$
    modulo $4a_Kc$, then by the Chinese Remainder Theorem, there exists an $n$ such that $n \equiv 1 \pmod {c}$ and $n \equiv 1+4b_K \pmod {8}$, where
    $b_K=\omega_K$ when $d \equiv 5 \pmod 8$ and $b_K=a_K$ otherwise.  When $d=-2$,  we can further take this $n$ to satisfy $n \in \mz$ (for example, we
    can take any $n \in \mz$ satisfying $n \equiv 1 \pmod {N(c)}$ and $n \equiv 5 \pmod 8$). As this $n \equiv 1 \pmod {4a_K}$, it follows that $n \equiv 1
    \pmod {4a_Kc}$, hence $\chi(n)=1$ but $\chi^{(-4c_Kc)}(n)=\leg {c_K}{n}=-1$ (note that $\leg {u}{n}=1$ when $u$ is a unit in $\mathcal{O}_K$),
    where the last equality follows from \eqref{2.05} when $d \neq -2$ and \eqref{2.06} when $d=-2$.
    This implies that $\chi^{(-4c_Kc)}$ is primitive. This also shows that $\chi^{(-4c_Kc)}$ is non-principal.

\subsection{The Gauss sums}
     For any $n \in  \mathcal{O}_K$, $(n,2)=1$, the quadratic Gauss sum $g_K(n)$ is defined by
\begin{align*}
   g_K (n) = \sum_{x \bmod{n}} \leg{x}{n} \widetilde{e}_K\leg{x}{n}, \quad \mbox{where} \quad \widetilde{e}_K(z) =\exp \left( 2\pi i  \left( \frac {z}{\sqrt{D_K}} - \frac {\overline{z}}{\sqrt{D_K}} \right) \right) .
   \end{align*}

   The following well-known relation (see \cite[p. 195]{P}) holds for all $n$:
\begin{align*} 
   \left| g_K(n) \right| & =\begin{cases}
    \sqrt{N(n)} \qquad & \text{if $n$ is square-free}, \\
     0 \qquad & \text{otherwise}.
    \end{cases}
\end{align*}

From the definition of $g_K(n)$, one easily derives the relation
\begin{align*} 
   g_K(n_1 n_2) =\leg{n_2}{n_1}\leg{n_1}{n_2}g_K(n_1) g_K(n_2), \quad (n_1, n_2) = 1.
\end{align*}

   For primes in $\mathcal{O}_K$ that are co-prime to $2D_K$ and $(1-d)/4$ when $d \equiv 1 \pmod 4$, we now introduce the notion of primary. When a prime is also a rational prime $p$, then we say it is primary when $p>0$. When a prime $\varpi$ satisfies $N(\varpi)=p$ with $p$ a rational
   prime, then we write $\varpi=a+b\omega_K$ to see that
\begin{align} \label{p}
   p & =\begin{cases}
    \displaystyle a^2+ab+b^2\frac {1-d}{4} \qquad & \text{if $d \equiv 1 \pmod 4$ }, \\
     \displaystyle a^2-db^2  \qquad & \text{if $d \equiv 2, 3 \pmod 4$ }.
    \end{cases}
\end{align}
    Note that $p$ is also co-prime to $2D_k$ and $(1-d)/4$ when $d \equiv 1 \pmod 4$.  One deduces easily from this and the above expression for $p$ that $(ab,p)=1$. \newline

    We first consider the case when $d \equiv 1 \pmod 4$. In this case, we have
\begin{align*}
   \leg {-b(a+b(1-d)/4)}{p}_{\mz}= \leg {a^2}{p}_{\mz}=1.
\end{align*}

    It follows that
\begin{align*}
   \leg {b}{p}_{\mz}= \leg {-(a+b(1-d)/4)}{p}_{\mz}.
\end{align*}

    We now define the prime $\varpi=a+b\omega_K$ to be primary if $b \equiv 1 \pmod 4$ or $a+b(1-d)/4 \equiv -1 \pmod 4$ when $(b, 2) \neq 1$. Note that
    when $(b, 2) \neq 1$, we must have $(a,2)=1$, for otherwise we will have $2|p$, contradicting the fact that $p$ is co-prime to $2$. Since $U_K=\{ \pm 1
    \}$, it is easy to see that each prime ideal in $K$ has a unique primary generator $\varpi$. When $\varpi$ is primary, we deduce via quadratic
    reciprocity and \eqref{p} that
\begin{align}
\label{2.8}
   \leg {b}{p}_{\mz}= \leg {-(a+b(1-d)/4)}{p}_{\mz}=1.
\end{align}

    Now suppose that $d=-2$. In this case,  we define the prime $\varpi=a+b\omega_K$ to be primary if $b=2^kb'$ with $k, b' \in \mz, k \geq 0, b' \equiv 1 \pmod 4$. Note that when
    $k \geq 1$, then again $(a,2)=1$ and it follows from \eqref{p} that $p \equiv 1 \pmod 8$, hence $\leg {2}{p}_{\mz}=1$.  Again each prime ideal in $K$ has a
    unique primary generator $\varpi$ and when $\varpi$ is primary, we deduce via quadratic reciprocity and \eqref{p} that
\begin{align}
\label{2.8'}
   \leg {b}{p}_{\mz}= 1.
\end{align}

  We now evaluate the Gauss sum at each primary prime $\varpi$. We have the following
\begin{lemma}
\label{Gausssum}
   Let $\varpi$ be a primary prime in $\mathcal{O}_K$. Then
\begin{align*}
   g_K(\varpi)= \begin{cases}
    N(\varpi)^{1/2} \qquad & \text{if} \qquad N(\varpi) \equiv 1 \pmod 4,\\
    -i N(\varpi)^{1/2} \qquad & \text{if} \qquad N(\varpi) \equiv -1 \pmod 4.
\end{cases}
\end{align*}
\end{lemma}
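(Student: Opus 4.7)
The plan is to split the computation according to whether $\varpi$ is inert ($N(\varpi)=p^2$) or split ($N(\varpi)=p$) in $\mathcal{O}_K$; the ramified primes are excluded by the standing hypothesis $(\varpi,2)=1$ together with the definition of primary, which requires $\varpi$ to be coprime to $2D_K$ (and to $(1-d)/4$ when $d\equiv 1\pmod 4$). In both cases the first step is to evaluate $\widetilde{e}_K(x/\varpi)$ explicitly in terms of the classical additive character $e_p(t)=e(t/p)$ on $\mathbb{F}_p$. Using $\omega_K-\bar\omega_K=\sqrt{D_K}$, a short calculation gives $\widetilde{e}_K(x/p)=e_p(b)$ in the inert case (writing $x=a+b\omega_K$ with $a,b\in\mathbb{F}_p$) and $\widetilde{e}_K(x/\varpi)=e_p(-xb)$ in the split case (writing $\varpi=a+b\omega_K$ and taking $x\in\{0,1,\dots,p-1\}$ as representatives of $\mathcal{O}_K/\varpi\cong\mathbb{F}_p$).

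For a split primary prime, the Kronecker symbol $\leg{x}{\varpi}$ restricted to integer representatives collapses to the Legendre symbol $\leg{x}{p}_{\mz}$, since both characters are $\pm1$-valued and agree modulo $\varpi$. The substitution $y\equiv -xb\pmod p$ (valid because $(b,p)=1$, as noted just after \eqref{p}) then reduces $g_K(\varpi)$ to $\leg{-b}{p}_{\mz}\,g(p)$, where $g(p)=\sum_{y\bmod p}\leg{y}{p}_{\mz}e_p(y)$ is the classical quadratic Gauss sum. The primary normalization forces $\leg{b}{p}_{\mz}=1$ via \eqref{2.8} (respectively \eqref{2.8'} for $d=-2$), so that $g_K(\varpi)=\leg{-1}{p}_{\mz}\,g(p)$. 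Combining with the classical evaluation $g(p)=\sqrt p$ for $p\equiv 1\pmod 4$ and $g(p)=i\sqrt p$ for $p\equiv 3\pmod 4$ produces exactly $\sqrt p$ and $-i\sqrt p$ respectively, as required since $N(\varpi)=p$.

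For an inert primary prime $\varpi=p$, the terms with $b\equiv 0\pmod p$ contribute $p-1$, using that $\leg{a}{p}=a^{(p^2-1)/2}=(a^{p-1})^{(p+1)/2}=1$ for $a\in\mathbb{F}_p^{\times}$. For $b\not\equiv 0\pmod p$, the change of variable $u=a/b$ and the Frobenius identity $y^{p+1}=N(y)$ on $\mathbb{F}_{p^2}$ turn the inner character into $\leg{u+\omega_K}{p}=N(u+\omega_K)^{(p-1)/2}=\leg{Q(u)}{p}_{\mz}$, where $Q(u)=u^2+(\omega_K+\bar\omega_K)u+N(\omega_K)$ has discriminant $D_K$. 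Since $p$ is inert, $\chi_{D_K}(p)=-1$ and in particular $p\nmid D_K$; the standard character sum for quadratic polynomials then gives $\sum_u \leg{Q(u)}{p}_{\mz}=-1$. The Ramanujan-type identity $\sum_{b\not\equiv 0}e_p(b)=-1$ then contributes a further factor of $-1$, so $g_K(p)=(p-1)+(-1)(-1)=p$, agreeing with the lemma since $N(p)=p^2\equiv 1\pmod 4$.

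The main obstacle is the phase computation in the split case: pinning down $\leg{-b}{p}_{\mz}$ as $\leg{-1}{p}_{\mz}$ requires coupling the primary convention with the quadratic reciprocity law \eqref{quadreciKm1} and the supplementary formulas \eqref{2.05}, \eqref{2.06}, and the primary convention itself splits into two sub-cases within $d\equiv 1\pmod 4$ and requires a separate treatment for $d=-2$, where $\omega_K=\sqrt{-2}$ is irrational and $2$ ramifies. Once these sign bookkeeping issues are handled uniformly across the eight admissible values of $d$, the computation reduces cleanly to the classical quadratic Gauss sum evaluation.
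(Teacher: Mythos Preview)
Your argument is correct, and both cases reach the same endpoints as the paper, but the routes differ in interesting ways. In the split case you work directly with the integer representatives $\{0,\dots,p-1\}$ of $\mathcal{O}_K/\varpi$, which lets you read off $\widetilde{e}_K(x/\varpi)=e_p(-xb)$ and $\leg{x}{\varpi}=\leg{x}{p}_{\mz}$ immediately and then substitute; this is more elementary than the paper's approach, which rewrites $g_K(\varpi)$ as $\sum_{\nu}\widetilde{e}_K(\nu^2\bar\varpi/p)$, lifts to a two–variable exponential sum over $(\mz/p\mz)^2$, and diagonalises the resulting binary quadratic form by an explicit $GL_2(\mz/p\mz)$ change of variables before landing on $\leg{-b}{p}_{\mz}\tau(\chi_p)$. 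In the inert case the situation is reversed: the paper observes $g_K(p)=\sum_{x\bmod p}\widetilde{e}_K(x^2/p)$ and evaluates it in two lines by writing $x=a+b\omega_K$ and summing the $a$–variable first, whereas you split on $b=0$ versus $b\neq 0$, invoke the Frobenius identity $(u+\omega_K)^{p+1}=N(u+\omega_K)$, and appeal to the standard evaluation $\sum_{u}\leg{Q(u)}{p}_{\mz}=-1$ for a quadratic $Q$ of nonzero discriminant. Both arguments rely on the primary normalisation through \eqref{2.8} and \eqref{2.8'} to pin down the sign $\leg{-b}{p}_{\mz}=\leg{-1}{p}_{\mz}$, so the ``main obstacle'' you flag is exactly the one the paper isolates as well.
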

\begin{proof}
First, assume that $p=N(\varpi)$ is a rational prime so that $p=N(\varpi)=\varpi \overline{\varpi}$ with $(\varpi, \overline{\varpi})=1$ (note that by our definition of primary, $(p)$ does not ramify in $\mathcal{O}_K$. We use $R$ to denote a quadratic residue $\pmod \varpi$ and $N$
    to denote a quadratic non-residue $\pmod \varpi$. Note first that
\begin{align*}
  E_K :=\sum_{x \bmod \varpi}\tilde{e}_K \left( \frac
  x \varpi \right)=1+ \sum_{R \bmod{\varpi}} \widetilde{e}_K\leg{R}{\varpi}+\sum_{N \bmod{\varpi}} \widetilde{e}_K\leg{N}{\varpi}.
\end{align*}
   As $(\varpi, \overline{\varpi})=1$, we get that
\begin{align*}
  \sum_{x \bmod \varpi}\tilde{e}_K \left( \frac x \varpi \right)=\sum_{x \bmod \varpi}\widetilde{e}_K \left( \frac { x\overline{\varpi}}{ \varpi \overline{\varpi}} \right)=\sum_{x
  \bmod \varpi}\widetilde{e}_K \left( \frac { x\overline{\varpi}}{p} \right)=\sum_{x \bmod \varpi}\widetilde{e}_K \left( \frac {x}{p} \right).
\end{align*}
   On summing over a set of representatives $\pmod p$ instead of $\pmod \varpi$ and noting that $\omega_K-\overline{\omega}_K=\sqrt{D_K}$, we deduce that
\begin{align*}
  E_K \frac {N(p)}{N(\varpi)}=\sum_{x \bmod p}\widetilde{e}_K \left( \frac {x}{p} \right)=\sum_{y,z \in \mz / p\mz }\widetilde{e}_K \left( \frac {y+z\omega_K}{p} \right)=\sum_{z \in   \mz / p\mz }e \left( \frac {z}{p} \right) =0.
\end{align*}

   We conclude that $E_K=0$ and it follows that
\[ g_K (\varpi) = \sum_{R \bmod{\varpi}} \widetilde{e}_K\leg{R}{\varpi}-\sum_{N \bmod{\varpi}} \widetilde{e}_K\leg{N}{\varpi} =1+2\sum_{R \bmod{\varpi}} \widetilde{e}_K\leg{R}{\varpi} =\sum_{\nu \bmod{\varpi}} \widetilde{e}_K\leg{\nu^2}{\varpi}. \]

Now $g_K(\varpi)$ can be further rewritten as
\begin{align*}
   g_K(\varpi) =\sum_{\nu \bmod{\varpi}} \widetilde{e}_K\leg{\nu^2 \overline{\varpi}}{\varpi \overline{\varpi}}=\sum_{\nu \bmod{\varpi}}
   \widetilde{e}_K\leg{\nu^2 \overline{\varpi}}{p}.
\end{align*}

    On setting $\nu=x+y\omega_K$, $\varpi=a+b\omega_K$ so that $\overline{\varpi}=a+b\overline{\varpi}_K$,  we sum over a set of representatives $\pmod p$
    instead of $\pmod \varpi$ to deduce that
\begin{align}
\label{g_K}
   g_K(\varpi)\frac {N(p)}{N(\varpi)} =\sum_{\nu
   \bmod{p}} \widetilde{e}_K\leg{\nu^2 \overline{\varpi}}{p}
   =\begin{cases}
    \displaystyle \sum_{x,y \in \mz / p\mz }e \left( \frac {-bx^2+2axy-bdy^2}{p} \right) \qquad & \text{if $d \equiv 2, 3 \pmod 4$ }, \\ \\
    \displaystyle \sum_{x,y \in \mz / p\mz }e \left( \frac {-bx^2+2axy+(a+b(1-d)/4)y^2}{p} \right)  \qquad & \text{if $d \equiv 1 \pmod 4$ }.
    \end{cases}
\end{align}

    We are thus led to consider the evaluation of the following exponential sum:
\begin{align*}
    \sum_{x,y \in \mz / M\mz }e \left( \frac {fx^2+gxy+hy^2}{M} \right),
\end{align*}
   where $M,f,g,h \in \mz$ with $2|g, 2 \nmid M$. \newline

    Let $A$ be any rational integer co-prime to $M$ which is represented by the
binary quadratic form $fx^2+gxy+hy^2$. Then there exist integers $r$ and $t$ such that
\begin{align*}
A = fr^2 + grt+ ht^2 \not \equiv 0 \pmod M.
\end{align*}
    Define rational integers $s$ and $u$ by
\begin{align*}
 s=-gr-2ht, \quad u=2fr+gt.
\end{align*}

   Then we have
\begin{align*}
 \left (
\begin{matrix}
r& s\\
t & u
\end{matrix}
\right )^T\left (
\begin{matrix}
f& g/2\\
g/2 & h
\end{matrix}
\right )\left (
\begin{matrix}
r& s\\
t & u
\end{matrix}
\right )=\left (
\begin{matrix}
A & 0\\
0 & (4fh-g^2)A
\end{matrix}
\right )
\end{align*}
  in $\text{\it GL}(2, \mz/M\mz)$. \newline

  Moreover, we have
\begin{align*}
  ru-st =2A \not \equiv 0 \pmod M.
\end{align*}
   Thus, we have
\begin{align*}
 \left (
\begin{matrix}
r& s\\
t & u
\end{matrix}
\right ) \in \text{\it GL}(2, \mz/M\mz).
\end{align*}
  The above matrix hence induces an invertible change of variables so that we have
\begin{align*}
  \sum_{x,y \in \mz / M\mz }e \left( \frac {fx^2+gxy+hy^2}{M} \right)=\sum_{x,y \in \mz / M\mz }e \left( \frac {Ax^2+(4fh-g^2)Ay^2}{M} \right).
\end{align*}

   We can now apply the discussions above to study the expression for $g_K(\varpi)$ in \eqref{g_K}. Note that by taking $x=1, y=0$, we see that we can take $A$ to be $-b$ in our case and in
   either case we have
\begin{align*}
   p g_K(\varpi) =\sum_{x,y \in \mz / p\mz }e \left( \frac {-bx^2-4pby^2}{p} \right)=p\sum_{x\in \mz / p\mz }e \left( \frac {-bx^2}{p} \right)=p\leg {-b}{p}_{\mz}\tau(\chi_p),
\end{align*}
   where
\begin{align*}
   \tau(\chi_p)=\sum_{x\in \mz / p\mz }e \left( \frac {x^2}{p} \right)
\end{align*}
   is the quadratic Gauss sum in $\mq$. It follows from \eqref{2.8} and \eqref{2.8'} that
\begin{align*}
   g_K(\varpi) =\leg {-1}{p}\tau(\chi_p).
\end{align*}
   As it follows from \cite[Chap. 2]{Da} that $\tau(\chi_p)=p^{1/2}=N(\varpi)^{1/2}$ when $p \equiv 1 \pmod 4$ and $\tau(\chi_p)=ip^{1/2}=iN(\varpi)^{1/2}$
   when $p \equiv 3 \pmod 4$. \newline
   
Now if $\varpi=p$ is an odd rational prime, we compute $g(p)$ directly. As in \cite[Chap. 2]{Da} (note that in this case, we still have $\sum_{x \bmod p}\tilde{e}(x/p)=0$), we have
\begin{equation*}
  g(p) =  \sum_{x \bmod{p}}\tilde{e} \left( \frac {x^2}p \right).
\end{equation*}
   We now write $x=a+b\omega_K$ with $a, b \pmod p$ in $\mz$ to see that
\begin{equation*}
  g(p) =  \sum_{x \bmod{p}}\tilde{e} \left( \frac {x^2}p \right)=\sum^p_{b=1}\sum^p_{a=1}e \left( \frac {2ab-b^2}{p} \right)=p=N(p)^{1/2}.
\end{equation*}
Recall that $N(p)=p^2 \equiv 1 \pmod 4$.  This completes the proof of the lemma.
\end{proof}

     Now, we define more generally, for any $n,k \in \mathcal{O}_K, (n,2)=1$,
\begin{align*}
 g_K(k,n) = \sum_{x \bmod{n}} \leg{x}{n} \widetilde{e}\leg{kx}{n} \qquad \mbox{and} \qquad G_K(k,n) = \left (\frac {1-i}{2}+\leg {-1}{n}\frac {1+i}{2}
 \right )g_K(k,n).
\end{align*}

    Note that
\begin{align}
\label{gtoG}
 g_K(k,n) = \left (\frac {1+i}{2}+\leg {-1}{n}\frac {1-i}{2} \right )G_K(k,n).
\end{align}

    The notion of $G_K(k,n)$ is motivated by the work of K. Soundararajan \cite[Section 2.2]{sound1}.
    For our purpose in this paper, we only need know the explicit values of $G_K(k,n)$ at primary primes $n$, which is given in the following lemma, whose proof is omitted as it is similar to that of \cite[Lemma 2.4]{G&Zhao4}.
\begin{lemma} \label{GausssumMult}
(i) We have
\begin{align}
\label{2.7}
   G_K(rs,n) & = \overline{\leg{s}{n}} G_K(r,n), \qquad (s,n)=1.
\end{align}
(ii) Let $\varpi$ be a primary prime in $\mathcal{O}_K$. Suppose $\varpi^{h}$ is the largest power of $\varpi$ dividing $k$. (If $k = 0$ then set $h =
\infty$.) Then for $l \geq 1$,
\begin{align*}
 G_K(k, \varpi^l)& =\begin{cases}
    0 \qquad & \text{if} \qquad l \leq h \qquad \text{is odd},\\
    \varphi(\varpi^l)=\#(\mathcal{O}_K/(\varpi^l))^* \qquad & \text{if} \qquad l \leq h \qquad \text{is even},\\
    -N(\varpi)^{l-1} & \text{if} \qquad l= h+1 \qquad \text{is even},\\
    \leg {-k\varpi^{-h}}{\varpi}N(\varpi)^{l-1/2} \qquad & \text{if} \qquad l= h+1 \qquad \text{is odd},\\
    0 \qquad & \text{if} \qquad l \geq h+2.
\end{cases}
\end{align*}
\end{lemma}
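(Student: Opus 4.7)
The plan is to derive both parts from the defining sum $g_K(k,n)=\sum_{x\bmod n}\leg{x}{n}\widetilde{e}_K(kx/n)$, leaning on two basic facts throughout: (a) for $(x,\varpi)=1$, $\leg{x}{\varpi^l}=\leg{x}{\varpi}^l$ equals $1$ when $l$ is even and $\leg{x}{\varpi}$ when $l$ is odd, and the symbol vanishes when $\varpi\mid x$; (b) $\widetilde{e}_K(y)=1$ for every $y\in\mathcal{O}_K$, since $(y-\overline{y})/\sqrt{D_K}\in\mathbb{Z}$.

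For part (i), I would substitute $x\mapsto s^{-1}x$, which is a bijection on $\mathcal{O}_K/n$ because $(s,n)=1$. This converts $g_K(rs,n)$ into $\leg{s^{-1}}{n}g_K(r,n)=\leg{s}{n}g_K(r,n)$, and the twist factor defining $G_K$ depends only on $n$, so it carries through. Since $\leg{s}{n}\in\{\pm 1\}$ coincides with its complex conjugate, \eqref{2.7} follows.

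For part (ii), write $k=k'\varpi^h$ with $(k',\varpi)=1$, and split into four cases. If $l\le h$, then $kx/\varpi^l\in\mathcal{O}_K$ so the exponential is $1$; the sum is $\varphi(\varpi^l)$ when $l$ is even and $N(\varpi)^{l-1}\sum_{x\bmod\varpi}\leg{x}{\varpi}=0$ when $l$ is odd (and for $l$ even the $G_K$-twist factor equals $1$, matching the claim). If $l=h+1$ is even, then $kx/\varpi^l=k'x/\varpi$ depends only on $x\bmod\varpi$, so the sum collapses to $N(\varpi)^{l-1}\sum_{(x,\varpi)=1,\,x\bmod\varpi}\widetilde{e}_K(k'x/\varpi)=-N(\varpi)^{l-1}$, using the vanishing of $\sum_{x\bmod\varpi}\widetilde{e}_K(k'x/\varpi)$ already established inside the proof of Lemma~\ref{Gausssum}. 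If $l=h+1$ is odd, the same collapsing yields $N(\varpi)^h g_K(k',\varpi)=N(\varpi)^h\leg{k'}{\varpi}g_K(\varpi)$ by part (i) applied at $\varpi$; Lemma~\ref{Gausssum} combined with the $G_K$-twist factor $\tfrac{1-i}{2}+\leg{-1}{\varpi}\tfrac{1+i}{2}$ and the supplementary law \eqref{2.05} then uniformly produces $\leg{-k'}{\varpi}N(\varpi)^{l-1/2}$ across the two subcases $N(\varpi)\equiv\pm 1\bmod 4$.

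For the remaining case $l\ge h+2$, I would use a translation argument: the map $x\mapsto x+\varpi^{l-h-1}t$ permutes $\mathcal{O}_K/\varpi^l$, preserves $\leg{x}{\varpi}^l$ (since $l-h-1\ge 1$), and multiplies the exponential by $\widetilde{e}_K(k't/\varpi)$. Averaging over $t\bmod\varpi$ then gives $N(\varpi)\,g_K(k,\varpi^l)=g_K(k,\varpi^l)\sum_{t\bmod\varpi}\widetilde{e}_K(k't/\varpi)=0$, so $G_K$ vanishes. The main bookkeeping obstacle is the $l=h+1$ odd case, where the $N(\varpi)\bmod 4$ dichotomy in $g_K(\varpi)$ must be aligned with the matching dichotomy in the $G_K$-twist factor to collapse everything into the single Kronecker symbol $\leg{-k'}{\varpi}$; everything else reduces to elementary character-sum vanishing.
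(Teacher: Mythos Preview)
Your argument is correct and is precisely the standard computation the paper defers to \cite[Lemma~2.4]{G&Zhao4}: the change of variable $x\mapsto s^{-1}x$ for part~(i), and for part~(ii) the reduction of the sum over $x\bmod\varpi^l$ to a sum over $x\bmod\varpi$ via the observations that $\leg{x}{\varpi^l}$ and $\widetilde{e}_K(kx/\varpi^l)$ depend only on $x\bmod\varpi$ in the relevant ranges, together with the Ramanujan-type vanishing $\sum_{x\bmod\varpi}\widetilde{e}_K(k'x/\varpi)=0$ and the evaluation of $g_K(\varpi)$ in Lemma~\ref{Gausssum}. The only step worth flagging is that the identity $E_K=0$ in the proof of Lemma~\ref{Gausssum} is stated separately for split and inert primes there, but it holds in both cases, so your appeal to it is legitimate for every primary $\varpi$.
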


\subsection{The Explicit Formula}
\label{section: Explicit Formula}

Our approach in this paper relies on the following explicit
formula, which essentially converts a sum over zeros of an
$L$-function to a sum over primes. As it is similarly to that of \cite[Lemma 2.7]{G&Zhao4}, we omit the proof here.
\begin{lemma}
\label{lem2.1}
   Let $\phi(x)$ be an even Schwartz function whose Fourier transform
   $\hat{\phi}(u)$ is compactly supported. For any square-free $c \in \mathcal{O}_K, (c, 2)=1, X \leq N(c) \leq 2X$, we have
\begin{align*}
  S \left( \chi^{(-4c_Kc)}, \phi \right)   =\int\limits^{\infty}_{-\infty}\phi(t) \dif t-\frac 1{2}
   \int\limits^{\infty}_{-\infty}\hat{\phi}(u) \dif u-2S \left( \chi^{(-4c_Kc)},X; \hat{\phi} \right)+O \left( \frac {\log \log 3X}{\log
   X} \right),
\end{align*}
     with the implicit constant depending on $\phi$, where
\begin{equation*}
   S \left( \chi^{(-4c_Kc)},X;\hat{\phi} \right)=\frac 1{\log X}\sum_{\varpi \text{ primary}}\frac {\log
   N(\varpi)}{\sqrt{N(\varpi)}}\chi^{(-4c_Kc)}(\varpi)  \hat{\phi}\left( \frac {\log N(\varpi)}{\log X} \right).
\end{equation*}
\end{lemma}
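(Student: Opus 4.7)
The plan is to derive a standard Weil-type explicit formula for the Hecke $L$-function $L(s, \chi^{(-4c_Kc)})$, by combining the Hadamard product of the completed $L$-function with the Dirichlet series expansion of $-L'/L$ and the functional equation. Writing $\chi = \chi^{(-4c_Kc)}$, since $\chi$ is primitive, real, of trivial infinite type and conductor dividing $(4c_Kc)$, the completed $L$-function
\[
\xi(s, \chi) = (|D_K| N(4c_Kc))^{s/2} (2\pi)^{-s} \Gamma(s) L(s, \chi)
\]
is entire of order $1$ and satisfies $\xi(s, \chi) = \xi(1-s, \chi)$, since the root number of a primitive real Hecke character of trivial infinite type is $+1$.

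First I would apply Fourier inversion $\phi(y) = \int_{\rear} \hat{\phi}(u) e(uy) \dif u$ to rewrite
\[
S_K(\chi, \phi) = \int_{\rear} \hat\phi(u) \sum_j X^{i u \gamma_j} \dif u,
\]
using GRH so $\gamma_j \in \rear$. Next I would express the inner sum $\sum_\rho X^{u(\rho - 1/2)}$ as the sum of residues of $(-\xi'/\xi)(s,\chi) X^{u(s-1/2)}/(2\pi i)$ around a large rectangle in the critical strip. Shifting the contour to $\Re s = 1 + \epsilon$ and using the functional equation to reflect the left-hand line back produces a factor of $2$ on the prime side. I would then expand
\[
-\frac{L'}{L}(s, \chi) = \sum_{\varpi} \sum_{k \geq 1} \frac{\chi(\varpi)^k \log N(\varpi)}{N(\varpi)^{ks}}
\]
over primary prime powers and integrate against $\hat\phi(u)\dif u$. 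The $k = 1$ portion contributes exactly $-2 S \left( \chi, X; \hat\phi \right)$, while the $k \geq 2$ portion, combined with the $\Gamma'/\Gamma$ factor evaluated near the critical line via Stirling and simplified via Mertens-type estimates in $\mathcal{O}_K$, produces the term $-\tfrac{1}{2}\int \hat\phi(u) \dif u = -\tfrac{1}{2}\phi(0)$. The conductor factor contributes $\hat\phi(0) = \int \phi(t)\dif t$ up to an error of size $O(1/\log X)$, since $\log(|D_K| N(4c_Kc)) = \log X + O(1)$ throughout $N(c) \in [X, 2X]$.

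The hardest part will be the careful bookkeeping of error terms needed to achieve the claimed $O(\log \log 3X / \log X)$ remainder. The delicate step is the interplay between the $k = 2$ prime power contribution and the Archimedean $\Gamma'/\Gamma$ term, which must combine to yield exactly $-\tfrac{1}{2}\phi(0)$; the residual error in that cancellation is of size $\log \log X / \log X$, coming from Stirling-type bounds on $\Gamma'/\Gamma$ near the critical line and from applying summation by parts against the Mertens-type estimate for $\mathcal{O}_K$. Uniformity in $c$ with $N(c) \in [X, 2X]$ must be preserved, and the compact support of $\hat\phi$ conveniently ensures that $S \left( \chi, X; \hat\phi \right)$ is a finite sum over $N(\varpi) \leq X^V$ for some fixed $V > 0$, which simplifies the contour shift and the prime-power estimates considerably.
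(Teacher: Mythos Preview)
The paper omits this proof entirely, referring to the analogous Lemma~2.7 in the authors' earlier paper on $\mathbb{Q}(i)$; your proposal follows exactly the standard explicit-formula derivation that underlies that reference and is correct in outline. Two minor corrections worth noting: GRH is not needed for this lemma (since $\hat\phi$ has compact support, $\phi$ extends to an entire function of exponential type and the contour argument handles complex $\gamma_j$ directly), and the term $-\tfrac{1}{2}\int\hat\phi$ arises \emph{entirely} from the $k=2$ prime-square contribution via $\chi^2(\varpi)=1$ together with Mertens in $\mathcal{O}_K$, while the Archimedean $\Gamma'/\Gamma$ integral is itself $O(1/\log X)$ and belongs in the error term --- the $\log\log 3X$ in the remainder comes from the primes $\varpi\mid c$ that are excluded from the identity $\chi^2(\varpi)=1$.
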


   We note that in the expression of $S(\chi^{(-4c_Kc)},X;\hat{\phi})$, the sum is only over primary primes as there are only finitely many prime ideals that have no primary prime generators and hence they only contribute to the error term. \newline

     We shall also need the following
\begin{lemma}
\label{lem2.7}
Suppose that GRH is true. For any non-principal Hecke character $\chi$ of trivial infinite type with modulus $n$, we have for $x \geq 1$,
\begin{align*}
  S(x, \chi)=\sum_{\substack {N(\varpi) \leq x \\ \varpi \text{ primary}}} \chi (\varpi) \log N(\varpi)
\ll \min \left\{ x, x^{1/2}\log^{3} (x) \log N(n) \right\}.
\end{align*}
\end{lemma}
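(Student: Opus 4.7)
The plan is to establish the two bounds in the minimum separately.

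For the trivial bound $S(x,\chi)\ll x$, the triangle inequality and $|\chi(\varpi)|\le 1$ give $|S(x,\chi)|\le \sum_{N(\varpi)\le x,\,\varpi\text{ primary}}\log N(\varpi)$. By the uniqueness of the primary generator of each prime ideal of $\mathcal{O}_K$ (outside the finite exceptional set of ideals dividing $2D_K$, and $(1-d)/4$ when $d\equiv 1 \pmod 4$), this is bounded by the Chebyshev sum $\psi_K(x):=\sum_{N(\mathfrak{a})\le x}\Lambda_K(\mathfrak{a})$, and Landau's prime ideal theorem yields $\psi_K(x)\ll x$.

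For the non-trivial bound I would run the standard truncated Perron / contour-shift argument for the Hecke $L$-function $L(s,\chi)$. Starting from the Dirichlet series expansion of $-L'/L(s,\chi)$ valid for $\Re s > 1$, a truncated Perron formula at height $T$ with $c=1+1/\log x$ gives
$$\psi(x,\chi):=\sum_{N(\mathfrak{a})\le x}\Lambda_K(\mathfrak{a})\chi(\mathfrak{a}) = \frac{1}{2\pi i}\int\limits_{c-iT}^{c+iT}\!\left(-\frac{L'}{L}(s,\chi)\right)\frac{x^{s}}{s}\,\dif s + O\!\left(\frac{x\log^{2}x}{T}\right).$$
I would shift the contour to $\Re s = \tfrac{1}{2}+1/\log x$. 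Since $\chi$ is non-principal, $L(s,\chi)$ is entire (no pole at $s=1$), and GRH ensures that the shifted line contains no zeros, so no residues are picked up. Under GRH one also has the standard uniform bound $L'/L(s,\chi)\ll \log^{2}(N(n)(|t|+2))$ along this line; combined with the Hecke zero-counting bound $\#\{\rho:|\gamma-T|\le 1\}\ll \log(N(n)(T+2))$ and the choice $T = x^{1/2}$, the vertical and horizontal integrals together yield $\psi(x,\chi) \ll x^{1/2}\log^{3}(x)\log N(n)$. Passage from $\psi(x,\chi)$ to the primary-prime sum $S(x,\chi)$ costs only the prime-power contribution $O(x^{1/2}\log x)$ plus a trivial loss from the finitely many exceptional ideals without a primary generator.

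The main obstacle is the GRH-conditional bound on $L'/L(s,\chi)$ on the shifted line, with the explicit dependence on the conductor $N(n)$; this is classically derived from the Hadamard factorisation of the completed Hecke $L$-function together with a Borel--Carath\'eodory-type estimate. These steps are well documented in the Dirichlet case and the extension to Hecke $L$-functions of a fixed imaginary quadratic field $K$ is routine, with $|D_K|$ absorbed in the implicit constants. Given this ingredient, the combinatorics of the contour shift is entirely standard.
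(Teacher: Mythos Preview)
The paper does not supply a proof of this lemma at all: it is stated as a standard GRH-conditional input and then immediately used. So there is no ``paper's approach'' to compare against, and your proposal should be judged on its own merits.

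Your sketch is essentially correct and follows the standard argument. The trivial bound via $\psi_K(x)\ll x$ is fine. For the non-trivial bound, the Perron/contour-shift scheme you describe is exactly the classical one: write $\psi(x,\chi)$ by truncated Perron at height $T$, shift to $\Re s=\tfrac12+1/\log x$ (no pole at $s=1$ since $\chi$ is non-principal, no zeros crossed under GRH), bound $L'/L$ on the shifted line via the Hadamard product and the zero-counting function for Hecke $L$-functions, and balance by taking $T=x^{1/2}$. The passage from $\psi(x,\chi)$ to $S(x,\chi)$ by discarding prime powers and the finitely many non-primary primes is routine and costs $O(x^{1/2}\log x)$, as you say. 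The only caveat is that the precise power of $\log x$ depends on how carefully one handles the horizontal segments and the choice of $T$ avoiding ordinates of zeros; the stated $\log^{3}(x)\log N(n)$ is achievable with standard bookkeeping (see, e.g., the treatment for Dirichlet $L$-functions in Davenport or Iwaniec--Kowalski, which transfers directly to degree-$2$ Hecke $L$-functions over a fixed $K$). In short, your proposal is a valid proof outline for a result the paper quotes without proof.
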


\begin{proof}
The $x$ in the minimum is trivial and the other term comes from the standard analysis of $L'(s,\chi)/L(s,\chi)$.  This Lemma is the same as \cite[Lemma 2.5]{G&Zhao4} and the proof is given there.
\end{proof}

\subsection{Poisson Summation}
      Our proof of Theorems \ref{quadraticmainthm} requires the following application of Poisson summation:
\begin{lemma}
\label{Poissonsum} Let $n \in \mathcal{O}_K, (n, 2)=1$ and and $\leg {\cdot}{n}$ be the quadratic symbol $\pmod {n}$. For any Schwartz class function $W$,  we have for all
$a>0$,
\begin{align}
\label{PoissonsumQw}
   \sum_{m \in \mathcal{O}_K}\leg {m}{n}W\left(\frac {aN(m)}{X}\right)=\frac {X}{aN(n)}\sum_{k \in
   \mathcal{O}_K}G_K(k,n)\widetilde{W}_K\left(\sqrt{\frac {N(k)X}{aN(n)}}\right),
\end{align}
   where
\begin{align*}
   \widetilde{W}_K(t) &=\int\limits^{\infty}_{-\infty}\int\limits^{\infty}_{-\infty}W(N(x+y\omega_K))\widetilde{e}\left(- t(x+y\omega_K)\right)\dif x \dif y, \quad t
   \geq 0.
\end{align*}
\end{lemma}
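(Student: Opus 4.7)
The plan is to apply a two-dimensional Poisson summation on the lattice $\mathcal{O}_K\subset\mathbb{C}$ using the additive character $\widetilde{e}$. The key observation is that $\widetilde{e}(1)=1$ and $\widetilde{e}(\omega_K)=\exp(2\pi i(\omega_K-\overline{\omega}_K)/\sqrt{D_K})=\exp(2\pi i)=1$, so $\widetilde{e}$ is trivial on $\mathcal{O}_K$; hence the pairing $(k,\ell)\mapsto\widetilde{e}(k\ell)$ makes $\mathcal{O}_K$ its own Pontryagin dual. I begin by splitting the left-hand side by the residue class of $m$ modulo $n$: writing $m=r+n\ell$ with $r$ ranging over representatives of $\mathcal{O}_K/n\mathcal{O}_K$ and $\ell\in\mathcal{O}_K$, the $n$-periodicity of $\leg{\cdot}{n}$ gives
\begin{align*}
\sum_{m\in\mathcal{O}_K}\leg{m}{n}W\!\left(\frac{aN(m)}{X}\right)=\sum_{r\bmod n}\leg{r}{n}\sum_{\ell\in\mathcal{O}_K}W\!\left(\frac{aN(r+n\ell)}{X}\right).
\end{align*}

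Next I apply Poisson summation to the inner sum. The covolume of $\mathcal{O}_K$ in $\mathbb{C}$ with respect to Lebesgue area measure $dA$ is $\sqrt{|D_K|}/2$, so self-duality yields
\begin{align*}
\sum_{\ell\in\mathcal{O}_K}W\!\left(\frac{aN(r+n\ell)}{X}\right)=\frac{2}{\sqrt{|D_K|}}\sum_{k\in\mathcal{O}_K}\int_{\mathbb{C}}W\!\left(\frac{aN(r+nz)}{X}\right)\widetilde{e}(-kz)\,dA_z.
\end{align*}
The substitution $m=r+nz$ (Jacobian $N(n)$) followed by the rescaling $m=\sqrt{X/a}\,z'$ (Jacobian $X/a$) reduces the inner integral, up to the factor $\widetilde{e}(kr/n)\cdot X/(aN(n))$, to $\int_{\mathbb{C}} W(|z'|^2)\,\widetilde{e}(-(k\sqrt{X/a}/n)z')\,dA_{z'}$. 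Since both $W(|z'|^2)$ and $dA_{z'}$ are invariant under rotations of $\mathbb{C}$, I rotate $z'$ by a suitable unit-modulus complex number to absorb the phase of $k\sqrt{X/a}/n$, replacing this complex multiplier by its absolute value $t_k:=\sqrt{N(k)X/(aN(n))}$. Converting from the Lebesgue measure $dA_{z'}$ back to the coordinate measure $dz'_1\,dz'_2$ in the basis $(1,\omega_K)$ introduces a factor $\sqrt{|D_K|}/2$, which cancels the Poisson normalization and leaves exactly $\widetilde{W}_K(t_k)$.

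Assembling everything and identifying
\begin{align*}
\sum_{r\bmod n}\leg{r}{n}\widetilde{e}(kr/n)=g_K(k,n)
\end{align*}
by definition, I obtain the claimed formula with $g_K(k,n)$ in place of $G_K(k,n)$. To pass to $G_K$: when $\leg{-1}{n}=1$ the two Gauss sums coincide, while when $\leg{-1}{n}=-1$ the pairing $m\leftrightarrow -m$ on the left kills the original sum, and on the right the identity $g_K(-k,n)=\leg{-1}{n}g_K(k,n)$ combined with the invariance of $\widetilde{W}_K(t_k)$ under $k\mapsto-k$ forces the $k$-sum to vanish for either form of Gauss sum.

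The main technical obstacle is justifying the rotational reduction of the Fourier integral. It hinges on the fact that $\widetilde{e}(\lambda z)=\exp(2\pi i(\lambda z-\overline{\lambda z})/\sqrt{D_K})$ depends on $\lambda z$ only through its imaginary part, so a complex rotation of $z$ simply rotates the argument of $\lambda$ without affecting the rotation-invariant factor $W(N(z))=W(|z|^2)$ or the area measure. Making the rotation Jacobian match the Poisson normalization factor $2/\sqrt{|D_K|}$ is what pins down the correct form of the output $\widetilde{W}_K(t_k)$.
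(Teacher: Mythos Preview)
Your proof is correct and follows essentially the same approach as the paper: split into residues modulo $n$, apply two-dimensional Poisson summation over $\mathcal{O}_K$ with the character $\widetilde{e}$, change variables to produce $\widetilde{W}_K(t_k)$ times $\widetilde{e}(kr/n)$, and identify the $r$-sum as $g_K(k,n)$ before passing to $G_K(k,n)$ via the $k\leftrightarrow -k$ symmetry. The only differences are cosmetic---the paper combines your translation, rescaling, and rotation into a single linear change of variables $u+v\omega_K=\sqrt{N(n/k)}\,(k/n)(r+(x+y\omega_K)n)/\sqrt{X/a}$ in the $(1,\omega_K)$-basis (avoiding the intermediate passage through Lebesgue area measure), and it phrases the $g_K\to G_K$ step as an algebraic recombination using \eqref{gtoG} rather than your cleaner case split on $\leg{-1}{n}$.
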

\begin{proof}
   We first recall the following Poisson summation formula for
   $\mathcal{O}_K$ (see the proof of \cite[Lemma 4.1]{G&Zhao}), which is itself an easy consequence of the classical Poisson summation formula in
$2$ dimensions:
\begin{align*}
   \sum_{j \in \mathcal{O}_K}f(j)=\sum_{k \in
   \mathcal{O}_K}\int\limits^{\infty}_{-\infty}\int\limits^{\infty}_{-\infty}f(x+y\omega_K)\widetilde{e}\left( -k(x+y\omega_K) \right)\dif x \dif y.
\end{align*}
   We then have
\begin{align*}
    \sum_{m \in \mathcal{O}_K}\leg {m}{n}W\left(\frac {aN(m)}{X}\right) =& \sum_{r \shortmod n}\leg {r}{n}\sum_{j \in \mathcal{O}_K}W\left(\frac
    {aN(r+jn)}{X}\right)  \\
   =& \sum_{r \shortmod n}\leg {r}{n} \sum_{k \in
   \mathcal{O}_K}\int\limits^{\infty}_{-\infty}\int\limits^{\infty}_{-\infty}W\left(\frac {aN(r+(x+y\omega_K)n)}{X}\right)\widetilde{e}\left(-k(x+y\omega_K) \right) \dif
   x \dif y.
\end{align*}
   We change variables in the integral, writing
\begin{equation*}
   \sqrt{N\Big(\frac {n}{k}\Big )}\frac {k}{n}\frac {(r+(x+y\omega_K)n)}{\sqrt{X/a}}=u+v\omega_K,
\end{equation*}
   with $u,v \in \mr$. (If $k=0$ we omit the factors involving $k/n$.) The Jacobian of
this transformation being $aN(n)/X$ we find that
\begin{align*}
    \int\limits^{\infty}_{-\infty}\int\limits^{\infty}_{-\infty}W&\left(\frac {aN(r+(x+y\omega_K)n)}{X}\right)\widetilde{e}\left(-k(x+y\omega_K)\right)\dif x \dif y
   \\
   =& \frac {X}{aN(n)}\widetilde{e}\left(\frac {kr}{n}\right)\int\limits^{\infty}_{-\infty}\int\limits^{\infty}_{-\infty}
   W \left( N(u+v\omega_K) \right) \widetilde{e}\left (-(u+v\omega_K)\sqrt{N(k/n)X/a} \right) \dif u \dif v,
\end{align*}
   whence
\begin{align*}
    \sum_{m \in \mathcal{O}_K}\leg {m}{n}W\left(\frac {aN(m)}{X}\right) &= \frac {X}{aN(n)}\sum_{k \in
   \mathcal{O}_K}\widetilde{W}_K\left(\sqrt{\frac {N(k)X}{aN(n)}}\right)\sum_{r \shortmod n}\leg {r}{n}\widetilde{e}\left(\frac {kr}{n}\right).
\end{align*}
    The inner sum of the last expression above is $g_K(k,n)$ by definition. We apply \eqref{gtoG} to write $g_K(k,n)$ in terms of $G_K(k,n)$, using that
    $G_K(k,n)=\leg {-1}{n}G_K(-k,n)$ (by \eqref{2.7}), and recombining the $k$ and $-k$ terms to see that \eqref{PoissonsumQw} holds. This completes the proof of the lemma.
\end{proof}

    From Lemma \ref{Poissonsum}, we readily deduce the following
\begin{corollary}
\label{Poissonsumformodd} Let $n \in \mathcal{O}_K, (n, 2)=1$. For any Schwartz class function $W$,  we have
\begin{equation*}
   \sum_{\substack {c \in \mathcal{O}_K \\ (c,2)=1}}\leg {c}{n} W\left(\frac {N(c)}{X}\right)
    =\leg {c_K}{n} \frac {X}{N(n)}\sum_{\substack {(m) \\m | c_K}} \frac {\mu_K(m)}{N(m)}\sum_{\substack {k \in
   \mathcal{O}_K \\ \frac {c_K}{m} | k }}G_K(k,n)\widetilde{W}_K\left(\sqrt{\frac {N(k)X}{N(c_K)N(n)}}\right).
\end{equation*}
\end{corollary}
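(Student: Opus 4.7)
The plan is to detect the condition $(c,2)=1$ by M\"obius inversion over divisors of $c_K$ and then apply Lemma \ref{Poissonsum} to each resulting inner sum. In every case under consideration, $(c,2)=1$ is equivalent to $(c,c_K)=1$, since $(2)=(c_K)^2$ when $d=-2$ and $c_K=2$ otherwise. Writing
\[
\mathbf{1}_{(c,c_K)=1}=\sum_{\substack{(m)\\ m\mid (c,c_K)}}\mu_K(m)
\]
and interchanging summations expresses the left-hand side as a sum over ideals $(m)\mid (c_K)$ weighted by $\mu_K(m)$, times an inner sum over $c\in\mathcal{O}_K$ divisible by $m$. This rearrangement is well-posed on the ideal level because, as noted in Section \ref{sec2.4}, the Kronecker symbol $\leg{\cdot}{n}$ is trivial on units.

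For each $m\mid c_K$, I would write $c=mc'$. Since $(n,2)=1$ forces $(n,m)=1$, multiplicativity factors out $\leg{m}{n}$ and leaves the inner sum $\sum_{c'\in\mathcal{O}_K}\leg{c'}{n}W(N(m)N(c')/X)$, to which Lemma \ref{Poissonsum} applies with $a=N(m)$. This would yield
\[
\sum_{\substack{c\in\mathcal{O}_K\\ (c,2)=1}}\leg{c}{n}W\!\left(\tfrac{N(c)}{X}\right)=\frac{X}{N(n)}\sum_{\substack{(m)\\ m\mid c_K}}\frac{\mu_K(m)\leg{m}{n}}{N(m)}\sum_{k\in\mathcal{O}_K}G_K(k,n)\widetilde{W}_K\!\left(\sqrt{\tfrac{N(k)X}{N(m)N(n)}}\right).
\]

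To reconcile this with the stated right-hand side, I would execute the change of variables $k=(c_K/m)k'$ in the claimed expression, which bijects $\{k\in\mathcal{O}_K:(c_K/m)\mid k\}$ with $\mathcal{O}_K$ and collapses the argument of $\widetilde{W}_K$ to $\sqrt{N(k')X/(N(m)N(n))}$. Lemma \ref{GausssumMult}(i), combined with $(c_K/m,n)=1$ and the reality of the quadratic symbol, gives $G_K((c_K/m)k',n)=\leg{c_K/m}{n}G_K(k',n)$; the multiplicativity relation $\leg{c_K}{n}=\leg{m}{n}\leg{c_K/m}{n}$, together with $\leg{c_K}{n}^2=1$, merges the prefactor $\leg{c_K}{n}$ with $\leg{c_K/m}{n}$ into $\leg{m}{n}$, recovering the display above. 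The main obstacle is purely the bookkeeping of these Kronecker-symbol manipulations and verifying the divisibility-to-free-sum correspondence under the substitution; the analytic content is supplied entirely by Lemmas \ref{Poissonsum} and \ref{GausssumMult}.
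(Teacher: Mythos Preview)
Your proposal is correct and follows essentially the same approach as the paper: M\"obius inversion over divisors of $c_K$ to remove the coprimality condition, application of Lemma \ref{Poissonsum} with $a=N(m)$, and then the substitution $k\leftrightarrow (c_K/m)k'$ together with \eqref{2.7} to match the stated form. The only cosmetic difference is that the paper performs the final change of variables in the forward direction (from the derived expression to the statement) whereas you verify the equivalence starting from the claimed right-hand side.
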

\begin{proof}
      By Lemma \ref{Poissonsum}, we have
\begin{equation}
\label{2.21}
\begin{split}
  \sum_{\substack {c \in \mathcal{O}_K \\ (c,2)=1}}\leg {c}{n}W \left( \frac {N(c)}{X} \right)&=\sum_{\substack {(m) \\m | c_K}}\mu_K(m)\leg {m}{n}\sum_{c}\leg
  {c}{n}W \left( \frac {N(m)N(c)}{X} \right)\\
  &=\sum_{\substack {(m) \\m | c_K}}\mu_K(m)\leg {m}{n} \frac {X}{N(m)N(n)}\sum_{k \in
   \mathcal{O}_K}G_K(k,n)\widetilde{W}_K\left(\sqrt{\frac {N(k)X}{N(m)N(n)}}\right).
   \end{split}
\end{equation}

    Using \eqref{2.7}, we have
\begin{align*}
    G_K \left( \frac {c_K}{m}k,n \right)=\leg {c_K/m}{n} G_K(k,n).
\end{align*}
    Substituting this back to last expression in \eqref{2.21}, we get the desired result.
\end{proof}

    We will require some simple estimates on $\widetilde{W}_K(t)$ and its derivatives. It is easy to see that
\begin{align*}
     \widetilde{W}_{K}(t)  =
\begin{cases}
\ \displaystyle{   \int\limits_{\mr^2}W \left( x^2+xy+y^2\frac {1-d}{4} \right)e(ty) \ \dif x \dif y}  \qquad & \text{if $d \equiv 1 \pmod 4$}, \\ \\
\ \displaystyle{    \int\limits_{\mr^2}W \left( x^2-dy^2 \right) e(ty) \ \dif x \dif y} \qquad & \text{if $d \equiv 2, 3 \pmod 4$}.
\end{cases}
\end{align*}

      We change variables in the integral, writing
\[  x+\frac {y}{2}  =x', \quad \frac {\sqrt{-d}}{2}y=y',  \; \text{if $d \equiv 1 \pmod 4$} \; \; \text{and} \; \; x=x', \quad \sqrt{-d}y=y', \text{if $d \equiv 2, 3 \pmod 4$}. \]

The Jacobian of these transformations are $\sqrt{-d}/2$ when $d \equiv 1 \pmod 4$ and $\sqrt{-d}$, otherwise.  So we find that
\begin{align*}
     \widetilde{W}_{K}(t)  =
\begin{cases}
\displaystyle{   \frac {2}{\sqrt{-d}}\int\limits_{\mr^2}W \left( x'^2+y'^2 \right) \cos \left( \frac {4\pi ty'}{\sqrt{-d}} \right) \ \dif x' \dif y' } \qquad & \text{if $d \equiv 1 \pmod 4$}, \\ \\
\displaystyle{   \frac {1}{\sqrt{-d}}\int\limits_{\mr^2}W \left( x'^2+y'^2 \right) \cos \left( \frac {4\pi ty'}{\sqrt{-d}} \right) \ \dif x' \dif y'  } \qquad & \text{if $d \equiv 2, 3 \pmod 4$}.
\end{cases}
\end{align*}

    We deduce from the above that $\widetilde{W}_K(t)\in
    \mr$ for any $t \geq 0$. Suppose that $W(t)$ is a non-negative smooth function supported on $(1,2)$, satisfying $W(t)=1$ for $t \in (1+1/U, 2-1/U)$ and $W^{(j)}(t) \ll_j U^j$ for all integers $j \geq 0$. The following estimations for $\widetilde{W}_K(t)$ and its derivatives are from \cite[Section 2.8]{G&Zhao5} (beware of
    the difference between the supports of the functions):
\begin{align}
\label{bounds}
     \widetilde{W}^{(\mu)}_K (t)\ll_{j} \min \{ 1, U^{j-1}t^{-j} \}
\end{align}
    for all integers $\mu \geq 0$, $j \geq 1$ and all $t>0$. \newline

     We also have
\begin{align}
\label{w0}
    \widetilde{W}_K(0) =A_K+O(\frac 1{U}),
\end{align}
   where
\begin{align*}
 A_K =\begin{cases}
    \displaystyle \frac {2\pi}{\sqrt{-d}}  \qquad & \text{if $d \equiv 1 \pmod 4$ }, \\ \\
    \displaystyle \frac {\pi}{\sqrt{-d}}  \qquad & \text{if $d \equiv 2, 3 \pmod 4$ }.
\end{cases}
\end{align*}

    As $\Phi(t)$ satisfies the assumptions on $W(t)$, the estimations \eqref{bounds}-\eqref{w0} are valid for $\widetilde{\Phi}_K(t)$.  In what
    follows, we shall hence use these estimations for $\widetilde{\Phi}_K(t)$ without further justification.

\section{Proof of Theorem \ref{quadraticmainthm}}
\label{Section 3}

\subsection{Evaluation of  $C_K(X)$}

     We have
\begin{align*}
    \sumstar_{\substack {N(c) \leq X \\ (c, 2)=1}}1=\sum_{\substack {N(c) \leq X \\ (c, 2)=1}}\mu^2_{K}(c)=\sum_{\substack {N(c) \leq X \\ (c,
    2)=1}}\sum_{\substack { (l) \\ l^2 | c \\ (l,2)=1}}\mu_{K}(l)
    =\sum_{\substack {(l) \\ N(l) \leq \sqrt{X} \\ (l,2)=1}}\mu_{K}(l) \sum_{\substack {N(c) \leq X/N(l^2) \\ (c, 2)=1}} 1,
\end{align*}
    where the ``$*$'' on the sum over $c$ means that the sum is restricted to square-free elements $c$ of $\mathcal{O}_K$. \newline

   Using analyses similar to those in the study of the Gauss circle problem, we have, with $\theta = 131/416$ (see \cite{Huxley1}), that
\begin{align*}
  \sum_{N(a) \leq x} 1 =A_K x+O(x^{\theta}).
\end{align*}

    This implies that
\begin{align*}
     \sum_{\substack {N(c) \leq X/N(l^2) \\ (c, 2)=1}} 1 =
     A_K\left (\sum_{\substack {(m) \\m | c_K}} \frac {\mu_K(m)}{N(m)} \right )\frac {X}{N(l^2)}+O\left( \left( \frac {X}{N(l^2)} \right)^{\theta} \right).
\end{align*}

    We then deduce that
\begin{align*}
    \sumstar_{\substack {N(c) \leq X \\ (c, 2)=1}}1 =
     \displaystyle  X A_K \sum_{\substack {(m) \\m | c_K}} \frac {\mu_K(m)}{N(m)}\sum_{\substack { (l) \\   (l,2)=1 }}\frac
     {\mu_{K}(l)}{N(l^2)} +O(X^{1/2}).
\end{align*}

   We conclude from this that as $X \rightarrow \infty$,
\begin{align*}
    \sumstar_{(c, 2)=1}\Phi \left( \frac {N(c)}{X} \right) \sim \# C_K(X) \sim \displaystyle  X A_K \sum_{\substack {(m) \\m | c_K}} \frac {\mu_K(m)}{N(m)}\sum_{\substack { (l) \\ (l,2)=1 }}\frac
  {\mu_{K}(l)}{N(l^2)}.
\end{align*}

    Using arguments similar to those in \cite[Section 3.1]{G&Zhao4} and Lemma \ref{lem2.1},  we see that Theorem \ref{quadraticmainthm} follows from
\begin{equation}
\label{01.50}
  \lim_{X \rightarrow \infty} \frac{S_K(X, Y;\hat{\phi}, \Phi)}{X \log X}=
  \displaystyle -\frac {A_K }{4}\sum_{\substack {(m) \\m | c_K}} \frac {\mu_K(m)}{N(m)}\sum_{\substack { (l) \\   (l,2)=1 }}\frac
  {\mu_{K}(l)}{N(l^2)}\int\limits^{\infty}_{-\infty} \left( 1-\chi_{[-1,1]}(t) \right) \hat{\phi}(t) \dif t,
\end{equation}
   where $\phi$ is any Schwartz function with $\hat{\phi}$ supported in $(-2+\varepsilon, 2-\varepsilon)$ for any $0<\varepsilon<1$ and
\begin{align*}
    S_K(X,Y; \hat{\phi}, \Phi) =
    \sumstar_{(c, 2)=1} \ \sum_{\substack{ \varpi \text{ primary} \\ N(\varpi) \leq Y}} \frac {\chi^{(-4c_Kc)}(\varpi)\log
    N(\varpi)}{\sqrt{N(\varpi)}}\hat{\phi} \left( \frac {\log N(
   \varpi)}{\log X} \right) \Phi \left( \frac {N(c)}{X} \right).
\end{align*}
    Here $Y=X^{2-\epsilon}$ and we shall write the condition $N(\varpi) \leq Y$ explicitly throughout this section.
\subsection{Expressions $S_{K,M}(X,Y; \hat{\phi}, \Phi)$ and $S_{K,R}(X,Y; \hat{\phi}, \Phi)$ }
     Let $Z=\log^5 X $ and write
     $\mu_{K}^2(c)=M_{K,Z}(c)+R_{K,Z}(c)$ where
\begin{equation*}
    M_{K,Z}(c)=\sum_{\substack {(l), \ l^2|c \\ N(l) \leq Z}}\mu_{K}(l) \; \quad \mbox{and} \; \quad  R_{K,Z}(c)=\sum_{\substack {(l), \ l^2|c \\ N(l) >
    Z}}\mu_{K}(l).
\end{equation*}

   Define
\[ S_{K,M}(X,Y; \hat{\phi}, \Phi) =\sum_{(c, 2)=1}M_{K,Z}(c) \sum_{\substack{ \varpi \text { primary} \\ N(\pi) \leq Y}} \frac {\log
N(\varpi)}{\sqrt{N(\varpi)}}\chi^{(-4c_Kc)} (\varpi)  \hat{\phi} \left( \frac {\log N(
   \varpi)}{\log X} \right)\Phi \left( \frac {N(c)}{X} \right), \]
    and
\[ S_{K,R}(X,Y; \hat{\phi}, \Phi) =\sum_{(c, 2)=1}R_{K,Z}(c) \sum_{\substack{ \varpi \text { primary} \\ N(\pi) \leq Y}} \frac {\log
N(\varpi)}{\sqrt{N(\varpi)}}\chi^{(-4c_Kc)}(\varpi)  \hat{\phi} \left( \frac {\log N(
   \varpi)}{\log X} \right) \Phi \left( \frac {N(c)}{X} \right), \]
so that $S_K(X,Y; \hat{\phi}, \Phi)=S_{K,M}(X,Y; \hat{\phi}, \Phi)+S_{K,R}(X,Y; \hat{\phi}, \Phi)$. \newline

   As the proof of Theorem \ref{quadraticmainthm} is similar to those of \cite[Theorem 1.1]{G&Zhao4}. We shall focus on obtaining the (secondary) main term and be brief on the error terms in what follows. \newline

   We write $S_{K,M}(X,Y; \hat{\phi},\Phi)$ as
\begin{align*}
     S_{K,M}(X,Y; \hat{\phi}, \Phi)   = \sum_{\varpi \text { primary}} \frac {\log N(\varpi)}{\sqrt{N(\varpi)}} \leg{-c_K}{\varpi} \hat{\phi} \left( \frac {\log
     N(
   \varpi)}{\log X} \right) \sum_{\substack {(l) \\ (l,2)=1 \\ N(l) \leq Z}} \mu_{K}(l)\leg {l^2}{\varpi}  \sum_{\substack {c \in \mathcal{O}_K \\ (c, 2)=1} }
   \leg {c}{\varpi}\Phi \left( \frac {N(cl^2)}{X} \right).
\end{align*}

    Applying Corollary \ref{Poissonsumformodd}, we obtain that
\begin{align*}
    \sum_{\substack {c \in \mathcal{O}_K \\ (c, 2)=1} } \leg {c}{\varpi}\Phi \left( \frac {N(cl^2)}{X} \right)
   = \leg {c_K}{\varpi} \frac {X}{N(l^2\varpi)}\sum_{\substack {(m) \\m | c_K}} \frac {\mu_K(m)}{N(m)}\sum_{\substack {k \in
   \mathcal{O}_K \\ \frac {c_K}{m} | k }}G_K(k,\varpi)\widetilde{\Phi}_K\left(\sqrt{\frac {N(k)X}{N(c_Kl^2\varpi)}}\right).
\end{align*}

   We can now recast $S_{K,M}(X,Y; \hat{\phi}, \Phi)$ as
\begin{align*}
    & S_{K,M}(X,Y; \hat{\phi}, \Phi)  \\
    =& X\sum_{\substack {(l) \\ (l,2)=1 \\ N(l) \leq Z}} \frac {\mu_{K}(l)}{N(l^2)} \sum_{\substack {(m) \\m | c_K}} \frac {\mu_K(m)}{N(m)}
    \sum_{\substack{k \in
   \mathcal{O}_K \\ \frac {c_K}m |k }}\sum_{\varpi \text { primary}} \frac {\log N(\varpi)}{N(\varpi)^{3/2}}\leg {-l^2}{\varpi} G_K(k,
   \varpi)\hat{\phi} \left( \frac {\log N(\varpi)}{\log X} \right) \widetilde{\Phi}_K\left(\sqrt{\frac {N(k)X}{N(c_Kl^2\varpi)}}\right) \nonumber \\
   =& X\sum_{\substack {(l) \\ (l,2)=1 \\ N(l) \leq Z}} \frac {\mu_{K}(l)}{N(l^2)} \sum_{\substack {(m) \\m | c_K}} \frac {\mu_K(m)}{N(m)} \sum_{\substack
   {k \in
   \mathcal{O}_K \\ \frac {c_K}m |k \\ k \neq 0}}\sum_{\varpi \text { primary}} \frac {\log N(\varpi)}{N(\varpi)}\leg
   {kl^2}{\varpi}\hat{\phi} \left( \frac {\log N(\varpi)}{\log X} \right) \widetilde{\Phi}_K\left(\sqrt{\frac {N(k)X}{N(c_Kl^2\varpi)}}\right), \nonumber
\end{align*}
   as one deduces easily from Lemma \ref{Gausssum} that $G_K(0, \varpi)=0$ and when $k \neq 0$,
\begin{align*}
    G_K(k, \varpi)=\leg {-k}{\varpi}N(\varpi)^{1/2}.
\end{align*}

\subsection{Estimation of $S_{K,M}(X,Y; {\hat \phi}, \Phi)$,  the second main term}
   We define $\chi^{(kl^)}$ to be $\leg {kl^2}{\cdot}$. Similar to our discussions in Section \ref{sec2.4}, when $k \neq 0$ is not a square, $\chi_{kl^2}$ can be regarded as a non-principle Hecke character modulo $4kl^2$ of trivial infinite type. Just as the estimations in \cite[Section 3.6]{G&Zhao4}, we deduce via Lemma \ref{lem2.7} that the contribution of $k \neq 0, \square$ in $S_{K,M}(X,Y; {\hat \phi}, \Phi)$ is $o(X \log X)$. \newline
   
   We denote the contribution of the terms $k=\square, k \neq 0$ in $S_{K,M}(X,Y; \hat{\phi}, \Phi)$ by $S_{K,M, \square}(X,Y; \hat{\phi}, \Phi)$. Thus
\begin{align}
\label{3.2}
    S_{K,M}(X,Y; \hat{\phi}, \Phi) =S_{K,M, \square}(X,Y; \hat{\phi}, \Phi)+o(X \log X).
\end{align}   

   In what follows, we show that $S_{K,M, \square}(X,Y; \hat{\phi}, \Phi)$ gives rise to a second main term. Before we proceed, we need the following
    result:
\begin{lemma}
\label{Poissonsumoverk} For $y>0$,
\begin{align*}
    \sum_{\substack {(m) \\m | c_K}} \frac {\mu_K(m)}{N(m)}\sum_{\substack {k \in
   \mathcal{O}_K \\  \frac {c_K}{m} | k \\ k \neq 0 }}\widetilde{\Phi}_K\left(\frac {N(k)}{y}\right)
   =- \widetilde{\Phi}_K\left(0\right)\sum_{\substack {(m) \\m | c_K}} \frac {\mu_K(m)}{N(m)}+O \left( \frac {U^2}{y^{1/2}} \right).
\end{align*}
\end{lemma}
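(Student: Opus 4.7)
The plan is to substitute $k=(c_K/m)k'$, isolate the $k'=0$ contribution (which already produces the stated main term), and then use two-dimensional Poisson summation on the remaining full lattice sum. After the substitution, with $Y_m:=N(m)y/N(c_K)$,
\[
\sum_{\substack{k\in\mathcal O_K\\(c_K/m)\mid k\\ k\ne0}}\widetilde{\Phi}_K\!\bigl(N(k)/y\bigr)=\Bigl(\sum_{k'\in\mathcal O_K}\widetilde{\Phi}_K\!\bigl(N(k')/Y_m\bigr)\Bigr)-\widetilde{\Phi}_K(0),
\]
and the $-\widetilde{\Phi}_K(0)$ piece, once weighted by $\mu_K(m)/N(m)$ and summed over $m\mid c_K$, is exactly the main term of the lemma. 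It remains to show
\[
E_K:=\sum_{\substack{(m)\\ m\mid c_K}}\frac{\mu_K(m)}{N(m)}\sum_{k'\in\mathcal O_K}\widetilde{\Phi}_K\!\bigl(N(k')/Y_m\bigr)=O(U^2/y^{1/2}).
\]

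To bound $E_K$ I would invoke the $2$-dimensional Poisson summation formula (the trivial-character analogue of Lemma~\ref{Poissonsum}). Rescaling the integration variables by $\sqrt{Y_m}$ turns the inner lattice sum into
\[
Y_m\sum_{\ell\in\mathcal O_K}\mathcal I_\ell,\qquad \mathcal I_\ell:=\int\!\!\int\widetilde{\Phi}_K\bigl(N(u+v\omega_K)\bigr)\,\widetilde{e}\bigl(-\sqrt{Y_m}\,\ell(u+v\omega_K)\bigr)\,du\,dv.
\]
For $\ell=0$ the polar-type change of coordinates already used in Section~\ref{sec 2} reduces $\mathcal I_0$ to a constant $C_K$ depending only on $\Phi$ and $K$. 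The $\ell=0$ contribution to $E_K$ is therefore $(yC_K/N(c_K))\sum_{m\mid c_K}\mu_K(m)$, and a short case-by-case check of the splitting of $(c_K)$ in $\mathcal O_K$ (for $d=-2$, $(\sqrt{-2})$ is the unique prime above $(2)$; for $d\equiv 5\pmod 8$, $(2)$ is inert; for $d=-7$, $(2)$ splits into two distinct primes) shows $\sum_{m\mid c_K}\mu_K(m)=0$ in every case. Hence the $\ell=0$ contribution vanishes identically after summation over $m$.

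For $\ell=a+b\omega_K\ne 0$, using that $\widetilde{e}(-t\alpha)$ depends only on $\operatorname{Im}(\alpha)$ for real $t$, one rewrites $\mathcal I_\ell$ as a standard $2$-dimensional oscillatory integral of frequency $\asymp\sqrt{Y_m\,N(\ell)}$ (this is immediate for $d\equiv 2,3\pmod 4$; the $d\equiv 1\pmod 4$ case is analogous). Two integrations by parts in the $(u,v)$ variables, combined with the derivative bounds \eqref{bounds} for $\widetilde{\Phi}_K$, give $|\mathcal I_\ell|\ll U^2/(Y_m\,N(\ell))^2$, so $Y_m|\mathcal I_\ell|\ll U^2/(Y_m\,N(\ell)^2)$. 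Summing over $\ell\ne 0$ (using $\sum_{\ell\ne 0}N(\ell)^{-2}<\infty$) yields $O(U^2/Y_m)$, and summing over the finitely many $m\mid c_K$ with $Y_m\gg y$ produces the claimed error.

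The crux of the argument is the arithmetic identity $\sum_{m\mid c_K}\mu_K(m)=0$: without it, the $\ell=0$ term of Poisson summation would contribute a quantity of order $y$, completely swamping the bound claimed in the lemma. The oscillatory integral estimates for $\ell\ne 0$ are more routine, but one has to choose the number of integrations by parts carefully since the derivative bounds \eqref{bounds} are only polynomial in $U$; two IBPs turn out to suffice to render the $\ell$-sum convergent and to yield the stated error term.
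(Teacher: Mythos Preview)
Your argument is essentially the paper's own proof: split off $k'=0$ to obtain the main term, apply two-dimensional Poisson summation to the remaining full-lattice sum, and bound the dual sum by integration by parts. Your $\mathcal I_\ell$ is exactly the paper's $\breve{\Phi}\bigl(\sqrt{Y_m N(\ell)}\bigr)$, and your observation that the $\ell=0$ term vanishes because $\sum_{(m)\mid c_K}\mu_K(m)=0$ is precisely what the paper uses implicitly when it passes directly from \eqref{2.200} to the conclusion.

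One minor slip: two integrations by parts in the oscillatory integral yield only $\breve{\Phi}(t)\ll t^{-2}$, i.e.\ $|\mathcal I_\ell|\ll (Y_m N(\ell))^{-1}$, and then $\sum_{\ell\neq 0}N(\ell)^{-1}$ diverges. The paper performs three integrations by parts to get $\breve{\Phi}(t)\ll U^2/t^3$, hence $Y_m|\mathcal I_\ell|\ll U^2/(Y_m^{1/2}N(\ell)^{3/2})$, which is summable over $\ell\neq 0$ and gives exactly the stated $O(U^2/y^{1/2})$. With this adjustment your proof matches the paper's.
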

\begin{proof}
   Note that
\begin{align*}
   \sum_{\substack {(m) \\m | c_K}} \frac {\mu_K(m)}{N(m)}\sum_{\substack {k \in
   \mathcal{O}_K \\  \frac {c_K}{m} | k \\ k \neq 0}}\widetilde{\Phi}_K\left(\frac {N(k)}{y}\right)
   =\sum_{\substack {(m) \\m | c_K}} \frac {\mu_K(m)}{N(m)}\sum_{\substack {k \in
   \mathcal{O}_K \\ \frac {c_K}{m} | k }}\widetilde{\Phi}_K\left(\frac {N(k)}{y}\right)- \widetilde{\Phi}_K\left(0\right)\sum_{\substack {(m) \\m | c_K}}
   \frac {\mu_K(m)}{N(m)}
\end{align*}
and
\begin{align*}
    \sum_{\substack {(m) \\m | c_K}} \frac {\mu_K(m)}{N(m)}\sum_{\substack {k \in
   \mathcal{O}_K \\ \frac {c_K}{m} | k }}\widetilde{\Phi}_K\left(\frac {N(k)}{y}\right)= \sum_{\substack {(m) \\m | c_K}} \frac
   {\mu_K(m)}{N(m)}\sum_{\substack {k \in
   \mathcal{O}_K}}\widetilde{\Phi}_K\left(\frac {N(c_K/m)N(k)}{y}\right).
\end{align*}
   By taking $n=1$ in \eqref{PoissonsumQw}, we immediately obtain that for $m | c_K$,
\begin{equation*}
      \sum_{k \in \mathcal{O}_K }\widetilde{\Phi}\left(\frac {N(c_K/m)N(k)}{y}\right) = \frac {y}{N(c_K/m)}\sum_{j \in
   \mathcal{O}_K }\breve{\Phi}\left(\sqrt{\frac {N(j)y}{N(c_K/m)}}\right),
\end{equation*}
   where
\begin{align*}
   \breve{\Phi}(t) =\int\limits^{\infty}_{-\infty}\int\limits^{\infty}_{-\infty}\widetilde{\Phi}(N(u+v\omega_K))\widetilde{e}\left(-
   t(u+v\omega_K)\right)\dif u \dif v, \quad t \geq 0.
\end{align*}

    It follows that
\begin{align}
\label{2.200}
    \sum_{\substack {(m) \\m | c_K}} \frac {\mu_K(m)}{N(m)}\sum_{\substack {k \in
   \mathcal{O}_K}}\widetilde{\Phi}_K\left(\frac {N(c_K/m)N(k)}{y}\right)= \frac {y}{N(c_K)}\sum_{\substack {(m) \\m | c_K}} \mu_K(m)\sum_{\substack{ j \in
   \mathcal{O}_K}}\breve{\Phi}\left(\sqrt{\frac {N(j)y}{N(c_K/m)}}\right).
\end{align}
    We have, when $t>0$, using \eqref{bounds} and via integration by parts
\begin{align*}
     \breve{\Phi}(t) \ll \frac {U^2}{t^3}.
\end{align*}
    The lemma follows immediately from this bound and \eqref{2.200}.
\end{proof}

    Now, by a change of variables $k \mapsto k^2$ and noting that $k^2=k_1^2$ if and only if $k =\pm k_1$, we see that
\begin{align*}
   & S_{K,M, \square}(X,Y; \hat{\phi}, \Phi) \\
   = & \frac {X}{2} \sum_{\substack {(l) \\ (l,2)=1 \\ N(l) \leq Z}} \frac {\mu_{K}(l)}{N(l^2)} \sum_{\substack {(\varpi, l)=1 \\ \varpi \text{ primary}}}
   \frac {\log N(\varpi)}{N(\varpi)}\hat{\phi} \left( \frac {\log N(
   \varpi)}{\log X} \right) \sum_{\substack {(m) \\m | c_K}} \frac {\mu_K(m)}{N(m)} \sum_{\substack {k \in
   \mathcal{O}_K \\ \frac {c_K}m |k \\ k \neq 0 \\  (k, \varpi)=1 }}\widetilde{\Phi}_K\left(N(k)\sqrt{\frac {X}{N(c_Kl^2\varpi)}}\right) \\
   =&\frac {X}{2} \sum_{\substack {(l) \\  (l,2)=1 \\ N(l) \leq Z}} \frac {\mu_{K}(l)}{N(l^2)}\sum_{\substack {(\varpi, l)=1 \\ \varpi
\text{ primary} \\ N(\varpi) \geq X/N(l^2) }} \frac {\log N(\varpi)}{N(\varpi)}\hat{\phi} \left( \frac {\log N(
   \varpi)}{\log X} \right) \sum_{\substack {(m) \\m | c_K}} \frac {\mu_K(m)}{N(m)} \sum_{\substack {k \in
   \mathcal{O}_K \\ \frac {c_K}m |k \\ k \neq 0}}\widetilde{\Phi}_K\left(N(k)\sqrt{\frac {X}{N(c_Kl^2\varpi)}}\right)+o(X\log X) \\
   =& -\widetilde{\Phi}(0)\frac {X}{2}\sum_{\substack {(m) \\m | c_K}} \frac {\mu_K(m)}{N(m)}\sum_{\substack {(l) \\ (l,2)=1 \\ N(l) \leq Z
}}  \frac {\mu_{K}(l)}{N(l^2)} \sum_{\substack {(\varpi, l)=1 \\ \varpi \text{ primary}  \\ N(\varpi) \geq X/N(l^2) }} \frac {\log
N(\varpi)}{N(\varpi)}\hat{\phi} \left( \frac {\log N(
   \varpi)}{\log X} \right)+o(X\log X). 
\end{align*}
   where the second equality above follows by noting that removing the condition $(k, \varpi)=1$ and then restricting the sum over $\varpi$ to those satisfying $N(\varpi) \geq X/N(l^2)$ only gives rise to an error term of $o(X\log X)$, the last equality above follows from Lemma \ref{Poissonsumoverk}. \newline

    Applying the following consequence of the prime ideal theorem \cite[Theorem 8.9]{MVa1} which asserts that for $x \geq 1$,
\begin{equation*}
    \sum_{\substack{ N(\varpi) \leq x \\ \varpi \text{ primary}}} \frac {\log N(\varpi)}{N(\varpi) }= \log x+O(\log \log 3x),
\end{equation*}
    we deduce via partial summation and \eqref{w0},
\begin{align}
\label{SKM}
    S_{K,M, \square}(X,Y; \hat{\phi}, \Phi) =-\frac {X \log X}{4}A_K \sum_{\substack {(m) \\m | c_K}} \frac {\mu_K(m)}{N(m)} \sum_{\substack {(l) \\ (l,2)=1}}  \frac
{\mu_{K}(l)}{N(l^2)}\int\limits^{\infty}_{-\infty} \left( 1-\chi_{[-1,1]}(t) \right) \hat{\phi}(t) \dif t+o(X \log X).
\end{align}
   
\subsection{Conclusion }  
    We estimate $S_{K,R}(X,Y; \hat{\phi}, \Phi)$ as in \cite[Section 3.3]{G&Zhao4} to see that
\begin{equation*}
    S_{K,R}(X,Y; \hat{\phi}, \Phi)=o(X\log X).
\end{equation*} 
   We now combine the above estimation with \eqref{3.2} and \eqref{SKM} to obtain that
\begin{align*}
  S_K(X, Y;\hat{\phi}, \Phi )
& = -\frac {X \log X}{4}A_K \sum_{\substack {(m) \\m | c_K}} \frac {\mu_K(m)}{N(m)} \sum_{\substack {(l) \\ (l,2)=1}}  \frac
{\mu_{K}(l)}{N(l^2)}\int\limits^{\infty}_{-\infty} \left( 1-\chi_{[-1,1]}(t) \right) \hat{\phi}(t) \dif t+o\left( X\log X \right),
\end{align*}
 which implies \eqref{01.50} and this completes the proof of Theorem \ref{quadraticmainthm}. \newline

\noindent{\bf Acknowledgments.} P. G. is supported in part by NSFC grant 11871082 and L. Z. by the FRG grant PS43707.  The authors thank H. M. Bui for some helpful discussions and the anonymous referee for his/her comments and suggestions.

\bibliography{biblio}
\bibliographystyle{amsxport}

\vspace*{.5cm}

\noindent\begin{tabular}{p{8cm}p{8cm}}
School of Mathematical Sciences & School of Mathematics and Statistics \\
Beihang University & University of New South Wales \\
Beijing 100191 China & Sydney NSW 2052 Austrlia \\
Email: {\tt penggao@buaa.edu.cn} & Email: {\tt l.zhao@unsw.edu.au} \\
\end{tabular}

\end{document}